\documentclass[10pt]{article}

\usepackage[letterpaper, left=1in, top=1in, right=1in, bottom=1in, verbose, ignoremp]{geometry}

\usepackage{url}\RequirePackage[colorlinks,citecolor=blue, linkcolor=blue,urlcolor = blue]{hyperref}
\usepackage{latexsym,amssymb,amsmath,amsfonts,graphicx,color,fancyvrb,amsthm,enumerate,subfigure,mathrsfs}
\DeclareMathAlphabet{\mathscrbf}{OMS}{mdugm}{b}{n}
\usepackage[authoryear,round]{natbib}
\usepackage[dvipsnames]{xcolor}
\usepackage{xy}\xyoption{all} \xyoption{poly} \xyoption{knot}
\usepackage{float}
\thispagestyle{empty} 
\usepackage{bm}
\usepackage{bbm}
\usepackage{multicol,multirow}
\usepackage{array}
\usepackage{relsize}
\usepackage{chngcntr}
\usepackage{etoolbox}

\usepackage{tikz}
\usetikzlibrary{patterns}

\usepackage{hyperref}

\newtheorem{theorem}{Theorem}[]
\newtheorem*{theorem*}{Theorem}

\newtheorem{proposition}[theorem]{Proposition}

\newtheorem*{claim*}{Claim}
\theoremstyle{definition}
\newtheorem{definition}[theorem]{Definition}
\theoremstyle{AppDefinition}

\theoremstyle{AppClaim}
\newtheorem{AppClaim}{Claim}[]
\theoremstyle{remark}

\newtheorem{example}[theorem]{Example}
\newtheorem*{example*}{Example}

\def\eq#1{(\ref{#1})}

\def\beginmat{ \left( \begin{array} }
\def\endmat{ \end{array} \right) }

\def\log{{\rm log}}

\newcommand*\diff{\mathop{}\!\mathrm{d}}



\newcommand{\T}{\intercal}

\allowdisplaybreaks

\makeatletter
\def\@biblabel#1{}
\makeatother

\makeatletter
\patchcmd{\NAT@citex}
  {\@citea\NAT@hyper@{%
     \NAT@nmfmt{\NAT@nm}%
     \hyper@natlinkbreak{\NAT@aysep\NAT@spacechar}{\@citeb\@extra@b@citeb}%
     \NAT@date}}
  {\@citea\NAT@nmfmt{\NAT@nm}%
   \NAT@aysep\NAT@spacechar\NAT@hyper@{\NAT@date}}{}{}

\patchcmd{\NAT@citex}
  {\@citea\NAT@hyper@{%
     \NAT@nmfmt{\NAT@nm}%
     \hyper@natlinkbreak{\NAT@spacechar\NAT@@open\if*#1*\else#1\NAT@spacechar\fi}%
       {\@citeb\@extra@b@citeb}%
     \NAT@date}}
  {\@citea\NAT@nmfmt{\NAT@nm}%
   \NAT@spacechar\NAT@@open\if*#1*\else#1\NAT@spacechar\fi\NAT@hyper@{\NAT@date}}
  {}{}

\makeatother


\begin{document}
\def\spacingset#1{\renewcommand{\baselinestretch}%
{#1}\small\normalsize} \spacingset{1}
\begin{flushleft}
{\Large{\textbf{Tropical Sufficient Statistics for Persistent Homology}}}
\newline
\\
Anthea Monod$^{1,\dagger}$, Sara Kali\v{s}nik$^{2}$, Juan \'Angel Pati\~{n}o-Galindo$^{1}$, and Lorin Crawford$^{3-5}$
\\
\bigskip
\bf{1} Department of Systems Biology, Columbia University, New York, NY, USA 
\\
\bf{2} Department of Mathematics, Wesleyan University, Middletown, CT, USA
\\
\bf{3} Department of Biostatistics, Brown University, Providence, RI, USA
\\
\bf{4} Center for Statistical Sciences, Brown University, Providence, RI, USA
\\
\bf{5} Center for Computational Molecular Biology, Brown University, Providence, RI, USA
\\
\bigskip
$\dagger$ Corresponding e-mail: am4691@cumc.columbia.edu
\end{flushleft}


\section*{Abstract}
We show that an embedding in Euclidean space based on tropical geometry generates stable sufficient statistics for barcodes.  In topological data analysis, barcodes are multiscale summaries of algebraic topological characteristics that capture the ``shape'' of data; however, in practice, they have complex structures that make them difficult to use in statistical settings.  The sufficiency result presented in this work allows for classical probability distributions to be assumed on the tropical geometric representation of barcodes.  This makes a variety of parametric statistical inference methods amenable to barcodes, all while maintaining their initial interpretations.  More specifically, we show that exponential family distributions may be assumed and that likelihood functions for persistent homology may be constructed.  We conceptually demonstrate sufficiency and illustrate its utility in persistent homology dimensions 0 and 1 with concrete parametric applications to human immunodeficiency virus and avian influenza data.


\section{Introduction}\label{sec:intro}

In this paper, we provide statistical sufficiency in Euclidean space for persistent homology --- an important concept in the field of topological data analysis (TDA) that summarizes the ``shape" and ``size" of data.  Our result is based on a topological embedding given by functions defined in tropical geometry.  With these sufficient summaries, probability distributions from classical statistics may be applied to persistent homology.  More importantly, extensive statistical methodology and parametric inference have become accessible to TDA.

Recently, TDA has become particularly relevant due to its theoretical foundations that allow for dimensionality reduction with qualitative and robust summaries of observed data.  To this end, it is applicable to a wide range of complex data structures that arise in various domains of data science.  Persistent homology is an important topological invariant upon which many TDA methods depend when applied in practice \citep{ZC}.  Specifically, this method has been used to address problems in fields ranging from sensor networks~\citep{VinEvader, adams}, medicine~\citep{Ferri, ADCOCK201436}, neuroscience~\citep{Chung2009, Curto2013, Giusti_Pastalkova_Curto_Itskov_2015}, as well as imaging analysis~\citep{Klein}.

The output of persistent homology is a collection of intervals (known as barcodes, or persistence diagrams, in their representation as points on a plane) rather than numerical quantities.  This has made it difficult to apply the method to parametric data analysis.  Recently, there have been substantial advancements in applied topology to bypass this issue.  More recent work entails developing statistical methodology and machine learning algorithms to work directly with barcodes and persistence diagrams.  Examples of these include confidence sets separating topological noise from topological signal on a persistence diagram \citep{fasy2014}; a positive-definite multiscale kernel for persistence diagrams \citep{Reininghaus_2015_CVPR}; and an input layer to deep neural networks to take in persistence diagrams directly \citep{NIPS2017_6761}.  An alternative approach entails manipulating barcodes so that they may be integrated into existing computational machinery, predominantly by vectorization, which is the focus of this paper.  Various such techniques have been developed with specific properties depending on the goal of the study.  A notable example is the persistence landscape, which assigns functions directly to barcodes and produces elements in Banach space \citep{Bubenik:2015:STD:2789272.2789275}.  The persistence landscape was then generalized to the persistence silhouette by \cite{Chazal:2014:SCP:2582112.2582128}, and to the kernel-smoothed persistence intensity function by \cite{chen2015statistical}, for statistical analyses such as clustering and weak convergence of bootstrapped persistence summaries.  Additionally, feature representations are constructed by \cite{bartovecInria}, by rearranging entries of a distance matrix between points in a persistence diagram.  Another approach proposed by \cite{7855573} uses binning to obtain a vector representation of features (though there are concerns of stability).  Similar methods where stable surfaces of persistence diagrams are constructed, based on kernel methods, with the aim of incorporating persistence information into support vector machines and other machine learning algorithms are studied by \cite{Reininghaus_2015_CVPR}, \cite{NIPS2015_5887}, and \cite{Adams:2017:PIS:3122009.3122017}.  Persistence information may also be integrated as vectors into functional regression models via topological summary statistics based on Euler characteristics \citep{1611.06818}.

Summary statistics are relevant to the notion of sufficiency, which is a desirable property in studies centered on inference.  Sufficiency allows a given sample of data to be mapped to a less complex or lower dimensional space (i.e., harboring less computational burden) without the loss of information \citep{Bartlett:1937aa}.  Moreover, sufficient statistics provide the functional form of probability distributions via a classical factorization criterion.  In other words, they represent the key basis for complete parametric inference in statistics.  Summary statistics have been explored in the context of persistent homology to study shapes and surfaces (closed, compact subsets) by \cite{Turner01122014}: specifically, persistence diagrams were used to summarize and compare heel bones of various species of primates.  Their construction provided a notion of summary statistics for the family of probability distributions on shapes and surfaces in 2 and 3 dimensions, in the space of persistence diagrams.  Perhaps more importantly, this effort also shed light on the possibility of proving statistical sufficiency in the context of topological analyses of data.


The main contribution of our work is to provide statistical sufficiency for the general family of probability measures on persistent homology.  A fundamental observation that makes this result possible is the formal probabilistic characterization of both the domain (i.e., the space of barcodes) and codomain (i.e., Euclidean space) of our mapping \citep{0266-5611-27-12-124007}.  In principle, inference should proceed from a probability model defined directly on the barcodes.  However, this type of specification is complicated due to the complex geometry of the set o fall barcodes.  Given sufficient statistics for barcodes, we may use the likelihood principle to proceed with parametric inferences directly.  As has been recently demonstrated, this suggests that a generative or sampling model on barcodes is possible in persistent homology \citep{1704.08248}.

The remainder of this paper is organized as follows.  In Section \ref{sec:barcodes}, we give a formal definition of persistent homology and outline the approach to map persistence barcodes to vectors using functions defined via tropical geometry.  We also discuss properties of these functions.  In Section \ref{sec:suff}, we provide our main result of sufficient statistics for persistence barcodes.  Based on this result, we given the form of an exponential family of probability distributions and likelihood functions for persistent homology.  Section \ref{sec:app} gives two concrete applications to data in dimensions 0 and 1.  The first is a practical demonstration of sufficiency based on human immunodeficiency virus (HIV) data, while the second shows how a parametric assumption statistically quantifies the biological distinction between intra- and intersubtype reassortment in avian influenza.  We close with a discussion in Section \ref{sec:discussion} with directions for future research.




\section{Persistent homology and vectorization}
\label{sec:barcodes}

In this section, we give the mathematical background to our main result.  We provide details on the space of barcodes arising from persistent homology and describe the construction of tropical coordinates on the space of persistence barcodes.

\subsection{The space of persistence barcodes}
\label{subsec:tda}

Homology groups were developed in classical topology to ``measure" the shape of spaces by abstractly counting the occurrences of patterns, such as the number of connected components, loops, and voids in 3 dimensions.  The primary data analytic utility of {\em persistent homology} or {\em persistence}~\citep{Frosini_sizetheory, elz-tps-02, ZC} --- an important concept from TDA that is heavily leveraged in many data applications --- is the adaptation of classical homology to point clouds (i.e., finite metric spaces), which is often the form in which data are collected.  Persistence, as a topological construct, produces a robust summary of the data that is invariant to noisy perturbations \citep{Cohen-Steiner2007}.  Moreover, it also captures integral geometric features (``size") of the data.  We now describe the construction of homology and its extension to persistent homology.


\paragraph{Simplicial homology and persistence.}


Simplicial homology studies the shape of simplicial complexes, which can be seen as skeletal representations of data types.  An example of a simplicial complex is a mesh, which can be viewed as a discretization of an image.  Simplicial complexes are an important building block in computing persistent homology: an abstract {\em simplicial complex} is a collection $K$ of nonempty subsets of a set $K_0$ such that $\tau \subset \sigma$ and $\sigma \in K$ guarantees that $\tau \in K$.  The elements of $K_0$ are called {\em vertices} of $K$, and the elements of $K$ are called {\em simplices}.  A simplex $\sigma$ has dimension $k$, or is a $k$-{\em simplex}, if it has cardinality $k + 1$. 

Simplicial homology is constructed by considering simplicial {\em $k$-chains}, which are linear combinations of $k$-simplices in finite $K$ over a field $\mathbb{F}$.  A set of $k$-chains defines a vector space $C_k(K)$.  When $\mathbb{F}$ is the binary field $\mathbb{Z}/2\mathbb{Z}$, the {\em boundary map} is
\begin{align*}
\partial_k: C_k(K) & \rightarrow C_{k-1}(K),\\
\partial_k\big( [ v_0, v_1, \ldots, v_k] \big) & = \sum_{i=0}^k [v_0, \ldots, v_{-i}, \ldots, v_k]
\end{align*}
with linear extension, where $v_{-i}$ indicates that the $i$th element has been dropped.  {\em Boundaries} are elements of $B_k(K) = \mathrm{im}\ \partial_{k+1}$, and {\em cycles} are elements of $Z_k(K) = \mathrm{ker}\ \partial_{k}$.

\begin{definition}
The {\em $k$th homology group} of $K$ is given by the quotient group
$$
H_k(K) := Z_k(K)/B_k(K).
$$
\end{definition}

The motivating idea underlying homology is to account for the structure of $K$, which is a finite simplicial complex representation of a topological space $X$.  In dimension 0, elements representing connected components of $X$ with finite simplicial complex representation $K$ generate the zeroth homology group $H_0(X)$.  If, for example, $X$ has two connected components, then $H_0(X) \cong \mathbb{F} \oplus \mathbb{F}$ (where $\cong$ denotes group isomorphism, and $\oplus$ here denotes the direct sum).  For higher dimensions $k \geq 1$, $k$-dimensional holes are the result of considering the boundary of a $(k+1)$-dimensional object.  $H_k(X)$ is generated by elements that represent $k$-dimensional holes in $X$.  The rank of the $k$th homology group is denoted by $\beta_k$ and referred to as the $k$th Betti number.



Intuitively, persistent homology is computed by tracking the progression of connected components, loops, and higher dimensional voids, with respect to a {\em filtration} assigned to the observed point cloud.  A filtration is a finite nested sequence of simplicial complexes $\mathcal K := \{ K_r \}_{r=a}^{b}$ indexed by a parameter $r \in \mathbf{R}^+$ such that $K_{r_1} \subseteq K_{r_2}$ if $r_1 < r_2$.


\begin{definition}
Let $K$ be a filtered simplicial complex $K_0 \subset \cdots \subset K_t = K$.  The {\em $k$th persistence module derived in homology},  or simply {\em $k$th persistent homology}, of $K$ is given by
$$
\mathrm{PH}_k(K) := \big\{ H_k(K_r) \big\}_{0 \leq r \leq t}
$$ 
together with the collection of linear maps $\{ \varphi_{r, s} \}_{0 \leq r \leq s \leq t}$, where $\varphi_{r,s}: H_k(K_r) \rightarrow H_k(K_s)$ is induced by the inclusion $K_r \hookrightarrow K_s$ for all $r, s \in [0,t]$ such that $r \leq s$.
\end{definition}

Persistent homology contains the homology information on individual spaces $\{ K_r \}$ and on the mappings between their homologies for every $K_r$ and $K_s$, where $r \leq s$.  Note that the motivating idea underlying persistent homology is to account for the homology of $X$ simultaneously across multiple scales.  Rather than restricting the analysis to one static instance, persistence tracks how the topological structure evolves over the indexed filtration.  The final output after computing persistence is a \emph{barcode} (i.e.~a collection of intervals).  Each interval (or bar) corresponds to a topological feature that appears (i.e.~birth) at the value of a parameter given by the left endpoint of the interval, and disappears or merges with another existing feature (i.e.~death) at the value given by the right endpoint.  Figure \ref{Fig_A4} provides an illustrative example of persistent homology.

\paragraph{Barcodes and persistence diagrams.}

For a given dimension $k$, a barcode consisting of $n$ intervals, obtained from computing the $k$th persistent homology, can be written as a collection $(x_1, d_1, x_2, d_2,\ldots, x_n, d_n)$, where $x_i$ is the left endpoint of the $i$\textsuperscript{th} interval and $d_i$ is its length.  In this paper, we assume that $x_i \geq 0$, thus meaning that the birth times of bars are nonnegative.  Notice that the order in which coordinates $(x_i, d_i)$ appear within the collection $(x_1, d_1, x_2, d_2, \ldots, x_n, d_n)$ does not affect the content of the topological information encoded in the barcode.  Hence, the indices $i=1, 2, \ldots, n$ are essentially dummy variables.  This means that all permutations of coordinates $(x_i, d_i)$ across $n$ positions define the same collection.  Algebraically, this amounts to taking the orbit space of the action of the symmetric group on $n$ letters on the product $\big([0, \infty) \times [0, \infty) \big)^n$, given by permuting the coordinates.  We denote this orbit space by $B_n$.

\begin{definition}
The {\em barcode space} $\mathcal{B}_{\leq n}$ consisting of bars with at most $n$ intervals is given by the quotient
\[
\coprod_{n\in \mathbf{N}_{\leq n}} B_n /_{\sim},
\]
where $\sim$ is generated by equivalences of the form 
\[
\big\{(x_1, d_1), (x_2, d_2), \ldots, (x_n, d_n) \big\} \sim \big\{(x_1, d_1), (x_2, d_2), \ldots, (x_{n-1}, d_{n-1}) \big\},
\]
whenever $d_n=0$. 
\end{definition}

We may set $y_i := x_i + d_i$ to be the right endpoint of the $i$th interval, and equivalently consider the collection of ordered pairs $(x_i, y_i)$ together with the diagonal $\Delta = \big\{ (x, y) \mid x = y \big\}$, where each point is taken with infinite multiplicity.  Here, $\Delta$ contains the bars of length zero (i.e., features that are born and die at exactly the same time and therefore have zero persistence).  Plotting these ordered pairs on a scatterplot, we obtain an alternate representation of the barcode, commonly known as a {\em persistence diagram}.

\paragraph{Metrics on barcode space.}

The space of barcodes can be equipped with various measures of distance, which, under mild regularity conditions (namely, local finiteness of barcodes or persistence diagrams), are also metrics.  In this paper, we consider the bottleneck and Wasserstein distances, which are the distance functions used in many existing results in the field of TDA.  In particular, the stability theorem \citep{Cohen-Steiner2007}, which provides a foundation that makes persistent homology applicable to real data analysis, is stated with respect to the bottleneck distance.  Unless explicitly stated otherwise, all results in this paper hold in generality for all metrics discussed here --- namely, for the Wasserstein $p$-distance for all $p \geq 1$, as well as the bottleneck distance.

The first step in specifying the distance between any two barcodes is to define the distance between any pair of intervals, as well as the distance between any interval and the set of zero length intervals $\Delta$.  We take
\[
\textrm{d}_\infty \big((x_i, d_i), (x_j, d_j) \big) = \max \big\{ |x_i-x_j|,\, |d_i-d_j +x_i- x_j| \big\}
\]
to be the distance between barcodes $\{(x_i, d_i)\}$ and $\{(x_j, d_j)\}$.  The distance between an interval $(x, d)$ and the set $\Delta$ is
\[
\textrm{d}_\infty \big((x, d), \Delta \big) = \frac{d}{2},
\]
which is simply the minimal $\textrm{d}_\infty$ distance between $(x, x+d)$ and $\Delta$.  Now let $\mathscr{B}_1 = \{I_\alpha\}_{\alpha \in A}$ and $\mathscr{B}_2 = \{J_\beta\}_{\beta \in B}$ be two barcodes. 

\begin{definition}
For a fixed $p$, finite sets $A$ and $B$, any bijection $\vartheta$ from a subset $A' \subseteq A$ to $B' \subseteq B$, and a penalty on $\vartheta$ set to
\[
P_p(\vartheta) := \sum_{a\in A'}\textrm{d}_\infty\big(I_a, J_{\vartheta(a)} \big)^p +\sum_{a\in A \setminus A'} \textrm{d}_\infty \big(I_a, \Delta \big)^p +\sum_{b\in B\setminus B'} \textrm{d}_\infty \big(I_b, \Delta \big)^p,
\] 
the {\em Wasserstein $p$-distance} ($p\geq 1$) between $\mathscr{B}_1$ and $\mathscr{B}_2$ is given by
\[
d^{W}_p(\mathscr{B}_1, \mathscr{B}_2) := \Big(\min_\vartheta P_p(\vartheta) \Big)^{\frac{1}{p}}.
\]
\end{definition}

\begin{definition}
For finite subsets and sets $A' \subseteq A$ and $B' \subseteq B$, a bijection $\vartheta$ as previously defined above, and a penalty on $\vartheta$ set to
$$
P_\infty(\vartheta) := \max\left\{ \max_{a \in A'} \Big( \textrm{d}_\infty \big(I_a, J_{\vartheta(a)}\big) \Big),~\max_{a\in A\setminus A'} \textrm{d}_\infty(I_a, \Delta),~\max_{b \in B\setminus B'} \textrm{d}_\infty (I_b, \Delta) \right\},
$$
the quantity
$$
d^B_\infty(\mathscr B_1, \mathscr B_2) := \min_\vartheta P_\infty(\vartheta),
$$
defines the {\em bottleneck distance} between $\mathscr B_1$ and $\mathscr B_2$, where the minimum is taken over all bijections from subsets $A'$ to $B'$.
\end{definition}

\paragraph{Barcodes  and data.}

In applications, data are generated by random processes and thus their representation as simplicial complexes and their resulting barcodes are also considered to be random objects.  Barcodes can therefore be seen as summary statistics of the data generating process because they allow for a dimensionality reduction of the ambient space.  Therefore, in a broader and more philosophical sense, topological approaches in data science applications can be used to provide sets of principal variables for scalable inference.  In practice, however, estimating persistent homology is subject to the curse of dimensionality, since it amounts to statistically estimating pairwise correspondences of sets in Hausdorff distance.

\subsection{Coordinates on the space of persistence barcodes}
\label{subsec:coords}

To integrate shape information of data (via barcodes) into existing computational machinery and methodology, several vectorization techniques have been proposed.  The vectorization that \cite{TropCoord} proposes takes the form of functions, which we build upon and extend to statistical theory in this paper. 

For the goal of relating persistent homology to statistical theory, studying functional vectorization techniques is preferable to studying algorithmic vectorization, because specific properties relevant to statistical inference, such as injectivity and measurability, are assessed on functions rather than algorithms.  In addition to the vectorization of \cite{TropCoord}, there are other vectorization approaches based on the identification of polynomials or functions.  For example, complex polynomials (where the points of a persistence diagram are the roots) have been developed by \cite{landi}; however, these are difficult to apply to parametric statistical analysis where data are primarily real-valued.  Persistence landscapes by \cite{Bubenik:2015:STD:2789272.2789275} are also less natural for statistical inference due to certain characteristics of Banach space (for example, the fact that norms on Banahc space need not comprise an inner product structure).

Another particularly relevant approach by \cite{algfn} assigns vectors to barcodes directly by defining a ring of algebraic functions.  These functions are polynomials and satisfy desirable properties such as symmetry in the variables (i.e., invariance of ordering of the bars), and invariance under addition of 0-length intervals.  Unfortunately, it turns out that these functions are not Lipschitz with respect to the Wasserstein $p$- and bottleneck distances (see Appendix \ref{appendix:math} for a proof).  This lack of Lipschitz continuity is problematic in real data applications that are generated by random processes because of its negative implications on stability in the target space under perturbations in the domain.  More specifically, there are no guarantees that the transformation is computationally or statistically robust.  As a viable solution to address such shortcomings of formerly defined polynomials by \cite{algfn}, tropical functions on the space of barcodes were identified.  These tropical functions possess the same desirable properties as those developed by \cite{algfn}, but additionally, were shown to be Lipschitz (and therefore continuous) with respect to the Wasserstein $p$-distance for $p\geq 1$ and the bottleneck distance \citep{TropCoord}.

We now briefly review the definition of the tropical semiring and the tropical functions that we study as coordinates on barcode space.


\paragraph{Fundamentals of tropical algebra.}

Tropical algebra is a branch of mathematics based on the study of the tropical semiring.

\begin{definition} 
The {\em tropical (equivalently, min-plus) semiring} is $(\mathbb{R} \cup \{+\infty \}, \oplus, \odot)$, with addition and multiplication being defined as
\[
\begin{array}{ccc}
a\oplus b := \min{\{ a, b \}} &\, \textrm{and} \,& a\odot b := a+b,
\end{array}
\]
where both operations are commutative and associative, and multiplication distributes over addition.  Similarly, there exists the {\em max-plus semiring} $(\mathbb{R} \cup \{-\infty \}, \boxplus, \odot)$, where multiplication of two elements is defined as in the case of the tropical semiring, but addition amounts to taking the maximum instead of the minimum.  Namely,
\[
\begin{array}{ccc}
a\boxplus b := \max{\{ a, b \}} &\, \textrm{and} \,& a\odot b := a+b,
\end{array}
\]
where again both operations are associative, commutative, and distributive as in the tropical semiring.
\end{definition}

As in the case of ordinary polynomials which are formed by multiplying and adding real variables, max-plus polynomials can be formed in a similar fashion as follows.  Let $x_1, x_2, \ldots, x_n$ be variables that are elements in the max-plus semiring.  A \emph{max-plus monomial expression} is any product of these variables, where repetition is permitted.  By commutativity, we can sort the product and write monomial expressions with the variables raised to exponents:
\begin{align*}
p(x_1, x_2, \ldots, x_n) & = a_1\odot x_1^{a_1^1} x_2^{a_2^1} \cdots x_n^{a_n^1} \boxplus a_2\odot x_1^{a_1^2} x_2^{a_2^2} \cdots x_n^{a_n^2}\boxplus \cdots \boxplus a_m\odot x_1^{a_1^m} x_2^{a_2^m} \cdots x_n^{a_n^m}\\
& = \max(a_1 + a_1^1 x_1 + \cdots + a_n^1 x_n,\, \cdots,\, a_m + a_1^m x_1 + \cdots + a_n^m x_n).
\end{align*}
Here, the coefficients $a_1, a_2, \ldots, a_m$ are in $\mathbb{R}$, and the exponents $a_j^i$ are in $\mathbb{Z}^+$ for ${1\leq j \leq n}$ and ${1\leq i \leq m}$.  Max-plus polynomial expressions do not uniquely define functions \citep[e.g.,][]{KalisnikSym}, which suggests an equivalence relation.  For max-plus polynomial expressions $p$ and $q$, if
\[
p(x_1, x_2, \ldots, x_n) = q(x_1, x_2, \ldots, x_n)
\]
for all $(x_1, x_2, \ldots, x_n)\in (\mathbb{R} \cup \infty)^n$, then $p$ and $q$ are said to be \emph{functionally equivalent}, and we write $p \sim q$.  Max-plus polynomials are the semiring of equivalence classes for max-plus polynomial expressions with respect to $\sim$. 

\paragraph{Identifying tropical functions for barcodes.}

We now give a list of tropical functions that we use to assign vectors to barcodes, restate existing relevant results, and prove additional statements necessary for our main result (see Theorem~\ref{sufficient} in Section \ref{sec:suff}).  For a complete discussion on the construction of the functions, and proofs related to the original definitions of the tropical functions, see the previous works: \cite{KalisnikSym, TropCoord}. 

Fix $n$ and let the symmetric group $S_n$ act on the following matrix of indeterminates by left multiplication:
\[
X = \begin{pmatrix} 
x_{1, 1} & x_{1, 2} \\
x_{2, 1} & x_{2, 2} \\
\vdots & \vdots \\
x_{n, 1} & x_{n, 2} \\
\end{pmatrix}.
\]
Here, the matrix $X$ represents a barcode with $n$ bars, where each bar represented by a row.  As previously mentioned, the action of $S_n$ on $X$ amounts to permuting the order of the bars within $(x_1, d_1, x_2, d_2, \ldots, x_n, d_n)$.  In parallel, also consider the collection of {\em exponent} matrices
\[
\mathscr{E}_n = \left\{ \begin{pmatrix} 
e_{1, 1} & e_{1, 2} \\
e_{2, 1} & e_{2, 2} \\
\vdots & \vdots \\
e_{n, 1} & e_{n, 2} \\
\end{pmatrix} \neq [0]_n^2 \,\mid\, e_{i, j} \in \{0, 1\} \textrm{ for }i=1,2,\ldots, n, 
\textrm{ and } j=1,2  \right\}.
\]
A matrix $E \in \mathscr{E}_n$, together with $X$, determines a max-plus monomial by $P(E) = x_{1, 1}^{e_{1,1}} x_{1, 2}^{e_{1,2}} \cdots x_{n, 1}^{e_{n,1}} x_{n, 2}^{e_{n, 2}}$.  Now, denote the set of orbits under the row permutation action on $\mathscr{E}_n$ by $\mathscr{E}_n/{S_n}$.  Each orbit $\{E_1, E_2, \ldots, E_m\}$ determines an {\em elementary 2-symmetric} max-plus polynomial by \[P(E_1)\boxplus P(E_2)\boxplus \cdots \boxplus P(E_m). \]
Let $E_{(e_{1, 1}, e_{1, 2}), (e_{2,1}, e_{2,2}), \ldots, (e_{n, 1}, e_{n, 2}) }$ denote the polynomial that arises from the orbit
$$
\left[\begin{pmatrix} 
e_{1, 1} & e_{1, 2} \\
e_{2, 1} & e_{2, 2} \\
\vdots & \vdots \\
e_{n, 1} & e_{n, 2} \\
\end{pmatrix}\right].
$$

\begin{example}\label{notation}
Let $n=3$.  Here are a few examples of elementary 2-symmetric max-plus polynomials:
\[
\begin{array}{ccccc}
E_{(0,1), (0,1), (0, 0)} (x_1, d_1, x_2, d_2, x_3, d_3) &=& d_1\odot d_2 \boxplus d_1\odot d_3 \boxplus d_2\odot d_3 &=& \max\{d_1+d_2,\, d_1+d_3,\, d_2+d_3 \}.\\
E_{(0,1), (0, 0), (0,1)} (x_1, d_1, x_2, d_2, x_3, d_3) &=& d_1\odot d_2 \boxplus d_1\odot d_3 \boxplus d_2\odot d_3 &=& \max\{d_1+d_2,\, d_1+d_3,\, d_2+d_3 \},\\
E_{(1, 1), (1, 1), (1, 1)} (x_1, d_1, x_2, d_2, x_3, d_3) &=& x_1 \odot d_1  \odot x_2  \odot d_2 \odot x_3\odot d_3 &=& x_1+d_1+x_2 +d_2+x_3+d_3.\\
\end{array}
\]
\end{example}

In the example above, the first two polynomials are equivalent: $E_{(0, 1), (0, 1), (0, 0)}$ is the same function as $E_{(0, 1), (0, 0), (0, 1)}$, which in turn is the same as $E_{(0, 0), (0, 1), (0, 1)}$.  Thus, only $n$ and the number of $(0, 1)$ rows are needed to determine the 2-symmetric polynomial in this example.  Since we know $n$, we may therefore simplify notation by writing this polynomial as $E_{(0, 1)^2}$ instead of listing every row.  This is the notation convention that we will adopt throughout the rest of the paper.  Namely, whenever $n$ is known, we write $E_{(e_{1, 1}, e_{1, 2}), \ldots, (e_{n, 1}, e_{n, 2})}$ as $E_{(1, 0)^k, (0, 1)^i, (1, 1)^j}$---where among $(e_{1, 1}, e_{1, 2}), (e_{2,1}, e_{2,2}), \ldots, (e_{n, 1}, e_{n, 2})$, $k$ is the number of $(1, 0)$ rows, $i$ the number of $(0, 1)$ rows, and $j$ the number of $(1, 1)$ rows.  The number of $(0, 0)$ rows is what remains (i.e., $n-i-j-k$), which we omit from the notation.  If we wish to emphasize the value of $n$, we write $E^n_{(1, 0)^k, (0, 1)^i, (1, 1)^j}$.

These functions are the building blocks for the functions we use to vectorize the barcode space.  Proposition 5.8 by~\cite{KalisnikSym} asserts that $E_{(0, 1)^i, (1, 1)^j, (1,0)^k}$ separate orbits under the row permutation action on $\mathbb{R}^{2n}$.  In other words, if $\mathscr{B}_1 \neq \mathscr{B}_2$, then $\{i, j, k\}$ exist such that $E_{(0, 1)^i, (1, 1)^j, (1,0)^k}(\mathscr{B}_1) \neq E_{(0, 1)^i, (1, 1)^j, (1,0)^k}(\mathscr{B}_2)$.  We improve this statement and show that the two-parameter family of $E_{(0, 1)^i, (1, 1)^j}$, parametrized by $i$ and $j$ alone, suffices to separate the barcodes, and that the $k$-factor of $(1, 0)$ rows is not needed.  This is an important result, as it enables us to embed the barcode space into a lower-dimensional Euclidean space, and hence tightens the main sufficiency result of this paper.  We now formalize this fact in the following proposition.

\begin{proposition}\label{minset}
Let $\big[(x_1, d_1, x_2, d_2,\ldots, x_n, d_n) \big]$ and $\big[(x_1', d_1', x_2', d_2', \ldots , x_n', d_n') \big]$ be two orbits under the row permutation action on $\mathbb{R}^{2n}$. 
If
\[
E^n_{(0, 1)^i, (1, 1)^j} \big[(x_1, d_1, x_2, d_2, \ldots, x_n, d_n) \big] = E^n_{(0, 1)^i, (1, 1)^j} \big[(x_1', d_1', x_2', d_2', \ldots , x_n', d_n') \big]
\]
for all $i, j \leq n$, then
\[
\big[(x_1, d_1, x_2, d_2, \ldots, x_n, d_n) \big] = \big[(x_1', d_1', x_2', d_2', \ldots , x_n', d_n') \big].
\]
In other words, $E^n_{(0, 1)^i, (1, 1)^j}$ separates barcodes.
\end{proposition}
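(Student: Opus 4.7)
The plan is to proceed by induction on $n$. The base case $n=1$ is immediate, since $E^1_{(0,1)^1}$ returns $d_1$ and $E^1_{(1,1)^1}$ returns $x_1+d_1$, together determining the single bar.

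For the inductive step, I first recover three marginal multisets. Expanding the max-plus definition, $E^n_{(1,1)^j}$ equals the sum of the $j$ largest death times $y_\ell=x_\ell+d_\ell$, so differencing across $j=1,\dots,n$ yields the sorted multiset $\{y_\ell\}$; similarly $E^n_{(0,1)^i}$ yields the sorted multiset $\{d_\ell\}$. For the birth times, the key identity
\[
E^n_{(0,1)^{n-j},(1,1)^j} \;=\; \sum_{\ell=1}^n d_\ell \;+\; (\text{sum of the top } j \text{ birth times})
\]
holds because when $i+j=n$ the disjoint blocks $I,J$ partition $[n]$, so $\sum_I d + \sum_J(x+d)=\sum_{[n]} d+\sum_J x$, and the max over $|J|=j$ picks the $j$ largest $x_\ell$. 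Subtracting $E^n_{(0,1)^n}$ and differencing in $j$ recovers the sorted multiset $\{x_\ell\}$, which must therefore coincide between the two orbits.

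Next, I pin down one bar that must appear in both orbits. A natural choice is the bar with the largest death time $y^\ast=E^n_{(1,1)^1}$ whose length $d^\ast$ is smallest among all max-$y$ bars. To extract $d^\ast$ from the polynomials I compare $E^n_{(0,1)^i,(1,1)^1}$ against the reference $R_i := y^\ast + E^n_{(0,1)^i}$. From the expansion
\[
E^n_{(0,1)^i,(1,1)^1} \;=\; \max_\ell \Bigl[\, y_\ell + \mathrm{top}_i^{[n]\setminus\{\ell\}}(d)\,\Bigr] \;\leq\; R_i,
\]
equality holds iff some max-$y$ bar has its length outside the $i$ largest $d$-values. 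Hence $r^\ast := \min\{\, i\geq 1 : E^n_{(0,1)^i,(1,1)^1} < R_i\,\}$ (or $r^\ast := n$ if this set is empty) is determined by the polynomial values and equals the maximum $d$-rank among max-$y$ bars; since $\{d_\ell\}$ is already recovered, $d^\ast$ is the $r^\ast$-th largest length, and $(x^\ast,d^\ast) := (y^\ast-d^\ast,\,d^\ast)$ is then a bar common to both orbits.

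To finish, I would remove $(x^\ast,d^\ast)$ from both orbits to obtain reduced orbits $\widetilde{\mathcal{B}},\widetilde{\mathcal{B}}'$ of size $n-1$ and show their polynomial values agree, whereupon the inductive hypothesis yields $\widetilde{\mathcal{B}}=\widetilde{\mathcal{B}}'$ and appending the common bar gives the desired equality. Each original polynomial satisfies the three-way recurrence
\[
E^n_{(0,1)^i,(1,1)^j} \;=\; \max\bigl\{\, y^\ast + E^{n-1}_{(0,1)^i,(1,1)^{j-1}}(\widetilde{\mathcal{B}}),\; d^\ast + E^{n-1}_{(0,1)^{i-1},(1,1)^j}(\widetilde{\mathcal{B}}),\; E^{n-1}_{(0,1)^i,(1,1)^j}(\widetilde{\mathcal{B}})\,\bigr\}
\]
according to whether the removed bar lies in $J$, in $I$, or outside the active set $I\cup J$. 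The main obstacle is inverting this recurrence, since it expresses each $E^n$-value as a max of three $E^{n-1}$-values; I plan to handle this by an outer induction on $i+j$, anchored at the corner cases $i+j=n$ where exactly one branch of the max is forced (at $j=n$ all bars are in $J$, so only the first branch contributes, and analogously at $i=n$), and then propagating downward using the already-recovered marginal data and the extremality of $(x^\ast,d^\ast)$ to determine which branch is realized at each remaining $(i,j)$.
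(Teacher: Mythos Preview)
Your overall architecture matches the paper's: induct on $n$, recover the marginal multisets, pin down one bar that must lie in both orbits, delete it, and invoke the inductive hypothesis on the remainder. Your extraction of $\{d_\ell\}$, $\{y_\ell\}$, and $\{x_\ell\}$ via successive differences is correct, and your argument identifying the common bar $(x^\ast,d^\ast)$ with $y^\ast=E^n_{(1,1)^1}$ and $d^\ast=d_{(r^\ast)}$ via the threshold $r^\ast$ is a clean and valid variant of what the paper does (the paper instead removes the bar with \emph{largest birth time} $x_n$, and works harder, via the functions $E^n_{(0,1)^{|I|},(1,1)}$, to show its partner length $d_n$ coincides in the two orbits).

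The gap is in your final step. You correctly write down the three-branch recurrence, but ``propagating downward \dots\ to determine which branch is realized at each remaining $(i,j)$'' is exactly the crux of the proof, and nothing you have written pins it down. A $\max$ of three quantities does not determine the three quantities, so from the equalities $E^n_{(0,1)^i,(1,1)^j}(\mathscr{B})=E^n_{(0,1)^i,(1,1)^j}(\mathscr{B}')$ alone you obtain only upper bounds on the $E^{n-1}$-values of $\widetilde{\mathscr{B}}$ and $\widetilde{\mathscr{B}}'$, not equality. Your anchor at $i+j=n$ recovers only the boundary values $E^{n-1}_{(0,1)^i}$, $E^{n-1}_{(1,1)^j}$, $E^{n-1}_{(0,1)^{n-1-j},(1,1)^j}$ (which you already have from the marginals anyway); for interior $(i,j)$ the unknown $E^{n-1}_{(0,1)^i,(1,1)^j}$ appears in three recurrences, and it can happen that in all three the $\max$ is attained by one of the \emph{known} terms, in which case you learn only an inequality. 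You have not explained how to close that case, and the ``extremality of $(x^\ast,d^\ast)$'' does not obviously help here, since your $x^\ast=y^\ast-d^\ast$ need not be the largest birth time.

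The paper sidesteps this inversion entirely by arguing contrapositively: assuming some $E^{n-1}_{(0,1)^i,(1,1)^j}$ differs on the reduced orbits, it produces a specific $E^n$-polynomial (namely $E^n_{(0,1)^i,(1,1)^{j+1}}$ or $E^n_{(0,1)^{i+1},(1,1)^j}$, depending on whether the removed bar sits in $J$ or $I$ in an optimal partition for $E^n_{(0,1)^i,(1,1)^j}$) that must also differ. The case analysis establishing, for instance, that
\[
E^n_{(0,1)^i,(1,1)^{j+1}}(\mathscr{B}_2)=x_n+d_n+E^{n-1}_{(0,1)^i,(1,1)^j}\bigl(\mathscr{B}_2\setminus\{(x_n,d_n)\}\bigr)
\]
under the relevant hypothesis uses repeatedly that $x_n\geq x_s$ for all $s$, which is available precisely because the paper removed the \emph{max-$x$} bar. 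If you want to keep your max-$y$ bar, you would need an analogous argument exploiting $y^\ast\geq y_s$; this may be feasible (indeed $y^\ast+E^{n-1}_{(0,1)^i,(1,1)^j}\geq E^{n-1}_{(0,1)^i,(1,1)^{j+1}}$ always holds, killing the third branch), but the comparison with the second branch $d^\ast+E^{n-1}_{(0,1)^{i-1},(1,1)^{j+1}}$ remains, and you should expect it to require the same kind of detailed index-swapping arguments the paper carries out. As it stands, the proposal is incomplete at precisely the point where most of the work lies.
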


\begin{proof}
We prove the statement by induction on the number of nonzero bars $\leq n$.

If $n=1$, then $E^1_{(0, 1)}\big[(x_1, d_1) \big] = E^1_{(0, 1)}\big[(x_1', d_1') \big]$ implies that $d_1 = d_1'$, and $\big(E^1_{(0, 1)}-E^1_{(1, 1)} \big)\big[(x_1, d_1) \big] = \big(E^1_{(0, 1)}-E^1_{(1, 1)} \big)\big[(x_1', d_1') \big]$ implies that $x_1 = x_1'$.  From here it follows that $[(x_1, d_1)] = [(x_1', d_1')]$, and the base case holds.

Now suppose that $E^{n-1}_{(0, 1)^i, (1, 1)^j}$ for nonnegative integers $i, j$, where $i+j\leq n-1$, separates barcodes with up to $(n-1)$ nonzero bars.  We must show that $E^{n}_{(0, 1)^i (1, 1)^j}$ separates barcodes with up to $n$ bars.  Let $\mathscr{B}_1=[(x_1, d_1, \ldots, x_n, d_n)]$ and $\mathscr{B}_2=[(x_1', d_1', \ldots, x_n', d_n')]$ be two barcodes with up to $n$ nonzero bars.  Assume without loss of generality that $x_1 \leq x_2 \leq \cdots \leq x_n$ and $x_1' \leq x_2' \leq \cdots \leq x_n'$.  We furthermore arrange that for all $i\leq n-1$ with $x_i=x_{i+1}$, we have $d_i\leq d_{i+1}$.

If $E^n_{(0, 1)} (\mathscr{B}_1) = E^n_{(0, 1)} (\mathscr{B}_2),$ it follows that $\max_{1\leq i\leq n} d_i = \max_{1\leq i\leq n} d_i'$.  Since 
$$
E^n_{(0, 1)^2} (\mathscr{B}_1) - E^n_{(0, 1)^1}  (\mathscr{B}_1) = E^n_{(0, 1)^2}  (\mathscr{B}_2)- E^n_{(0, 1)^1}  (\mathscr{B}_2),
$$
it follows that the second greatest element in $\{ d_i \mid 1\leq i\leq n \}$ is equal to the second greatest element in ${\{ d_i' \mid 1\leq i\leq n \}}$.  We apply this reasoning to the remaining elements in decreasing order and finally obtain $\{ d_i \mid 1\leq i\leq n \} = \{ d_i' \mid 1\leq i\leq n \}$.

The fact that $x_n$ equals $x_n'$ follows by applying $E^n_{(0, 1)^{n-1}, (1, 1)^1}$.  Computing the difference
$$
{E^n_{(0, 1)^{n-2}, (1, 1)^2}-E^n_{(0, 1)^{n-1}, (1, 1)^1}}
$$
yields $x_{n-1}=x_{n-1}'$.  We continue in this manner and finally arrive at the conclusion that $\{ x_1, x_2, \ldots, x_n\}$ is a permutation of $\{ x_1',  x_2', \ldots, x_n'\}$.  This means that unless
$
\big[(x_1, d_{\pi(1)}, \ldots, x_n, d_{\pi(n)}) \big] =\big[(x_1, d_1, \ldots, x_n, d_n) \big]
$
for some permutation $\pi$, one of the functions among $E^{n}_{(0, 1)^i, (1, 1)^j}$ separates $\mathscr{B}_1$ and  $\mathscr{B}_2$.

Assume now that we have such a $\pi$ where $\big[(x_1, d_{\pi(1)}, \ldots, x_n, d_{\pi(n)}) \big] =\big[(x_1, d_1, \ldots, x_n, d_n) \big]$.  Let $I = \{k \in \{ 1, 2, \ldots, n \} \mid d_{k} > d_{\pi(n)} \}$. 
Using the fact that $x_1 \leq x_2 \leq \cdots \leq x_n$, we evaluate $E^n_{ (0, 1)^{|I|},(1,1) }$, where $|I|$ is the cardinality of $I$:
\[
E^n_{(0,1)^{|I|}(1,1)}\big(\big[(x_{1}, d_{\pi(1)}, \ldots, x_{n}, d_{\pi(n)}) \big] \big) = x_n+ d_{\pi(n)} +  \sum_{k\in I}d_{k}.
\]
After splitting the index set according to membership in $I$, we arrive at
\[
E^n_{(0,1)^{|I|},(1,1)}\big( \big[(x_{1}, d_{1}, \ldots, x_{n}, d_n) \big] \big) = \max\bigg( \max_{ \ell \notin I } \big(x_\ell + \sum_{k\in I}d_k + d_{\ell} \big),\, \max_{ \ell \in I} \big(x_\ell + \sum_{k\in I}d_k + d_{\pi(n)} \big) \bigg).
\]
So if $d_n < d_{\pi(n)}$, then also $d_{n-i},d_{n-i+1}, \ldots < d_{\pi(n)}$, which implies that $n-i, n-i+1, \ldots , n \notin I$.  Thus, if $\ell \notin I$, then there are two cases.  First, $\ell<n-i$, in which case $x_\ell + d_\ell \leq x_{n-i-1} + d_{\pi(n)} <  x_n + d_{\pi(n)}$, giving rise to the inequality 
\[
E^n_{(0,1)^{|I|},(1,1)}\big( \big[(x_{1}, d_{1}, \ldots, x_{n}, d_n) \big] \big) < E^n_{(0,1)^{|I|},(1,1)}\big( \big[(x_{1}, d_{\pi(1)}, \ldots, x_{n}, d_{\pi(n)}) \big] \big).
\]
Second, if $n-i \leq \ell \leq n$, then $x_\ell + d_\ell = x_n + d_\ell < x_n + d_{\pi(n)}$ again.  As for the second max, if $\ell\in I$, then necessarily $\ell < n-i$, so $x_\ell + d_{\pi(n)} < x_n + d_{\pi(n)}$.  A symmetrical argument shows that one can distinguish between the barcodes with $E^n_{(0, 1)^{|I|},(1,1)}$, where $I = \{k \in \{ 1, 2, \ldots, n \} \mid d_{k} > d_{n} \}$ if ${ d_n > d_{\pi(n)}}$.  So unless the maximal $x$ values are not matched with the same $d$ values in $\mathscr{B}_1$ and $\mathscr{B}_2$, we can distinguish the barcodes using elementary 2-symmetric polynomials.

Now suppose that $x_n'=x_n$ and $d_n' = d_n$.  We show using the inductive hypothesis that if 
\[
{\mathscr{B}_1\setminus \{ (x_n, d_n) \} \neq \mathscr{B}_2\setminus \{ (x_n, d_n) \}},
\]
then we can distinguish $\mathscr{B}_1$ and $\mathscr{B}_2$ by elementary 2-symmetric polynomials in $2n$ variables, which we construct from elementary 2-symmetric polynomials in $2(n-1)$ variables.  By symmetry between $\mathscr{B}_1$ and $\mathscr{B}_2$, we may assume that there exist $i$ and $j$ such that
\[
E^{n-1}_{(0,1)^{i}, (1,1)^{j}} \big(\mathscr{B}_1\setminus \{ (x_n, d_n) \} \big) > E^{n-1}_{(0,1)^{i}, (1,1)^{j}} \big(\mathscr{B}_2\setminus \{ (x_n, d_n) \} \big).
\]
Note that $E^{n-1}_{(0,1)^{i}, (1,1)^{j}} \big(\mathscr{B}_k\setminus \{ (x_n, d_n) \} \big) \leq E^{n}_{(0,1)^{i}, (1,1)^{j}}(\mathscr{B}_k)$ for $k=1, 2$. 

If $E^{n}_{(0,1)^{i}, (1,1)^{j}} \big(\mathscr{B}_1 \big) \neq E^{n}_{(0,1)^{i}, (1,1)^{j}} \big(\mathscr{B}_2 \big)$, we are done, since we have a function that separates the two barcodes.  If $E^{n-1}_{(0,1)^{i}, (1,1)^{j}}\big(\mathscr{B}_2\setminus \{ (x_n, d_n) \} \big) = E^{n}_{(0,1)^{i}, (1,1)^{j}}\big(\mathscr{B}_2 \big)$, then
$
{E^{n}_{(0,1)^{i}, (1,1)^{j}}(\mathscr{B}_1)> E^{n}_{(0,1)^{i}, (1,1)^{j}}(\mathscr{B}_2)}
$
and thus $E^{n}_{(0,1)^{i}, (1,1)^{j}}$ distinguishes $\mathscr{B}_1$ and $\mathscr{B}_2$. It then suffices to study the situation when
\begin{equation}\label{eq1}
E^{n-1}_{(0,1)^{i}, (1,1)^{j}}\big(\mathscr{B}_2\setminus \{ (x_n, d_n) \} \big) < E^{n}_{(0,1)^{i}, (1,1)^{j}}(\mathscr{B}_2).
\end{equation}
In this situation $E^{n}_{(0,1)^{i}, (1,1)^{j}}$ must depend on at least $d_n$ or $x_n$ (or possibly both). 
Then in the partition $I, J$ such that 
\begin{equation}\label{eq2}
E^{n}_{(0,1)^{i}, (1,1)^{j}}(\mathscr{B}_2)= \sum_{j\in J} (x_j + d_j) + \sum_{i\in I} d_i,
\end{equation}
either $n\in I$ or $n\in J$. 

First we suppose that $n\in J$. We prove that then
$$
 E^{n}_{(0,1)^{i}, (1,1)^{j+1}}(\mathscr{B}_2) = x_n + d_n + E^{n-1}_{(0,1)^{i}, (1,1)^{j}}\big(\mathscr{B}_2\setminus \{ (x_n, d_n) \} \big).
$$
 
 A short consideration of the fact that we have three possibilities (either $x_n$ plays a role in the sum that equals $E^{n}_{(0,1)^{i}, (1,1)^{j+1}}(\mathscr{B}_2)$; or $x_n+d_n$; or neither appears in the sum) gives the following equation:
\begin{align*}
E^{n}_{(0,1)^{i}, (1,1)^{j+1}}(\mathscr{B}_2) = \max \Big(& x_n + d_n + E^{n-1}_{(0,1)^{i}, (1,1)^{j}}\big(\mathscr{B}_2\setminus \{ (x_n, d_n) \} \big),\\
&d_n + E^{n-1}_{(0,1)^{i-1}, (1,1)^{j+1}}\big(\mathscr{B}_2\setminus \{ (x_n, d_n) \} \big),\\
&E^{n-1}_{(0,1)^{i}, (1,1)^{j+1}}\big(\mathscr{B}_2\setminus \{ (x_n, d_n) \} \big)\Big).
\end{align*}
We must show that this maximum cannot be attained in the second or third expression. We prove this statement by contradiction.\\

$\bullet$ Assume that $E^{n}_{(0,1)^{i},(1,1)^{j+1}}(\mathscr{B}_2) = E^{n-1}_{(0,1)^{i}, (1,1)^{j+1}}\big(\mathscr{B}_2\setminus \{ (x_n, d_n) \} \big)$.  This means that $I_1, J_1$ exist such that 
\begin{equation}\label{eq3}
 E^{n}_{(0,1)^{i}, (1,1)^{j+1}}(\mathscr{B}_2) = \sum_{j\in J_1} (x_j + d_j) + \sum_{i\in I_1} d_i
 \end{equation}
and $n\notin I_1\cup J_1$.  Also, $s$ exists such that $s\in  J_1\cup I_1 \setminus  J\cup I$. 

Assume that $s\in J_1$. If $x_s+d_s> x_n + d_n$, then  
\[
 \sum_{j\in J\cup \{s\}\setminus \{n\}} (x_j + d_j) + \sum_{i\in I} d_i > \sum_{j\in J} (x_j + d_j) + \sum_{i\in I} d_i,
\]
which contradicts the definition of $E^{n}_{(0,1)^{i}, (1,1)^{j+1}}(\mathscr{B}_2)$.  If $x_s+d_s= x_n + d_n$, then  
\[
 \sum_{j\in J\cup \{s\}\setminus \{n\}} (x_j + d_j) + \sum_{i\in I} d_i = \sum_{j\in J} (x_j + d_j) + \sum_{i\in I} d_i= E^{n}_{(0,1)^{i}, (1,1)^{j}}(\mathscr{B}_2).
\]
The sum on the left does not involve $n$, which would imply that 
$$
E^{n}_{(0,1)^{i}, (1,1)^{j}}(\mathscr{B}_2) = E^{n-1}_{(0,1)^{i}, (1,1)^{j}}(\mathscr{B}_2\setminus\{(x_n, d_n)\}),
$$
contradicting (\ref{eq1}).  Now suppose $x_s+d_s < x_n + d_n$.  Then 
\[
 \sum_{j\in J_1\cup\{n\}\setminus\{s\}} (x_j + d_j) + \sum_{i\in I_1} d_i > \sum_{j\in J_1} (x_j + d_j) + \sum_{i\in I_1} d_i,
\]
which contradicts (\ref{eq3}).

Now assume that $s\in I_1$. 
Since $s\in I_1$ and not in $I$, and since $I$ and $I_1$ have the same cardinality, there exists $t\in I\setminus I_1$.  If $d_s=d_t$, we replace $t$ with $s$ ($s\notin I\cup J$, so this replacement is possible) and start over with $I$ replaced by $I\cup\{s\}\setminus\{t\}$, beginning with (\ref{eq2}).  The key observation is that this replacement reduces the number of elements in $I_1\setminus I$ by 1.  Thus, after at most $|I|$ such replacements, we may either assume that $I=I_1$, in which case our element $s\in J_1\cup I_1\setminus J\cup I$ necessarily lies in $J_1$ and our previous arguments prove the assertion; or we reach the situation that $I\neq I_1$, and there exist $s\in I_1\setminus(I\cup J)$ and $t\in I\setminus I_1$ with $d_s\neq d_t$.
 
If $d_s>d_t$, we arrive at a contradiction to (\ref{eq2}), since
\[
 \sum_{j\in J} (x_j + d_j) + \sum_{i\in I\cup \{s\}\setminus \{t\}} d_i > \sum_{j\in J} (x_j + d_j) + \sum_{i\in I} d_i.
\]

Now suppose that $d_s < d_t$. If $t\notin J_1$, then we replace $s$ by $t$ and arrive at a contradiction, since 
\[
 \sum_{j\in J_1} (x_j + d_j) + \sum_{i\in I_1\cup\{t\}\setminus\{s\}} d_i > \sum_{j\in J_1} (x_j + d_j) + \sum_{i\in I_1} d_i.
\]
We also must have that
\begin{equation}\label{eq6}
(x_n+d_n)+d_t \geq (x_t+d_t)+d_s.
\end{equation}
Otherwise,
\[
 \sum_{j\in J\cup\{t\}\setminus\{n\}} (x_j + d_j) + \sum_{i\in I\cup \{s\}\setminus\{t\}} d_i >  \sum_{j\in J} (x_j + d_j) + \sum_{i\in I} d_i,
 \]
which leads to a contradiction to (\ref{eq2}).  If $t\in J_1$, we use (\ref{eq6}) to obtain
\[
 \sum_{j\in J_1\cup\{n\}\setminus\{t\}} (x_j + d_j) + \sum_{i\in I_1\cup\{t\}\setminus\{s\}} d_i \geq \sum_{j\in J_1} (x_j + d_j) + \sum_{i\in I_1} d_i.
\]
Now we can replace $t$ with $n$ to obtain  
\[
E^{n}_{(0,1)^{i}, (1,1)^{j+1}}(\mathscr{B}_2) = x_n + d_n + E^{n-1}_{(0,1)^{i}, (1,1)^{j}}\big(\mathscr{B}_2\setminus \{ (x_n, d_n) \} \big).
\] 

$\bullet$ Assume that $E^{n}_{(0,1)^{i}, (1,1)^{j+1}}(\mathscr{B}_2) = d_n + E^{n-1}_{(0,1)^{i-1}, (1,1)^{j+1}}\big(\mathscr{B}_2\setminus \{ (x_n, d_n) \} \big)$.  This means that $I_1, J_1$ exist such that 
\begin{equation}\label{eq4}
 E^{n}_{(0,1)^{i}, (1,1)^{j+1}}(\mathscr{B}_2) = \sum_{j\in J_1} (x_j + d_j) + \sum_{i\in I_1} d_i + d_n
 \end{equation}
and $n\notin I_1\cup J_1$.  Take any $s\in J_1$.  Since $x_n \geq x_s$, we have $(x_n+d_n)+x_s \geq (x_s+d_s)+x_n$ and we may switch $s$ and $n$ to get
\[
\sum_{j\in J_1\cup\{n\}\setminus\{s\}} (x_j + d_j) + \sum_{i\in I_1} d_i + d_s\geq \sum_{j\in J_1} (x_j + d_j) + \sum_{i\in I_1} d_i + d_n,
\]
and because the right hand side is maximal,
\[
\sum_{j\in J_1\cup\{n\}\setminus\{s\}} (x_j + d_j) + \sum_{i\in I_1} d_i + d_s= \sum_{j\in J_1} (x_j + d_j) + \sum_{i\in I_1} d_i + d_n,
\]
and consequently,
\[
d_n + E^{n-1}_{(0,1)^{i-1}, (1,1)^{j+1}}\big(\mathscr{B}_2\setminus \{ (x_n, d_n) \} \big) =
d_n +x_n + E^{n-1}_{(0,1)^{i}, (1,1)^{j}}\big(\mathscr{B}_2\setminus \{ (x_n, d_n) \} \big).
\] 
So if $ E^{n}_{(0,1)^{i}, (1,1)^{j+1}}(\mathscr{B}_2) = d_n + E^{n-1}_{(0,1)^{i-1}, (1,1)^{j+1}}\big(\mathscr{B}_2\setminus \{ (x_n, d_n) \} \big)$, then also 
\[
E^{n}_{(0,1)^{i}, (1,1)^{j+1}}(\mathscr{B}_2) =
d_n +x_n + E^{n-1}_{(0,1)^{i}, (1,1)^{j}}\big(\mathscr{B}_2\setminus \{ (x_n, d_n) \} \big).
\]
 
We have just proved that $E^{n}_{(0,1)^{i}, (1,1)^{j+1}}(\mathscr{B}_2) = x_n + d_n + E^{n-1}_{(0,1)^{i}, (1,1)^{j}}\big(\mathscr{B}_2\setminus \{ (x_n, d_n) \} \big)$. We use this fact in the first line (\ref{line1}) of the following computation. The second line (\ref{line2}) uses the inductive hypothesis and the last line (\ref{line3}) follows from the definition of the functions we use:
\begin{align}
E^{n}_{(0,1)^{i}, (1,1)^{j+1}}(\mathscr{B}_2) &= x_n + d_n + E^{n-1}_{(0,1)^{i}, (1,1)^{j}}\big(\mathscr{B}_2\setminus \{ (x_n, d_n) \} \big), \label{line1}\\
& < x_n+d_n + E^{n-1}_{(0,1)^{i}, (1,1)^{j}}\big(\mathscr{B}_1\setminus \{ (x_n, d_n)\}\big), \label{line2}\\
&\leq E^{n}_{(0,1)^{i}, (1,1)^{j+1}}(\mathscr{B}_1). \label{line3} 
\end{align}


Now suppose that $n\in I$.  Analogously to the previous example, we will first prove that
\[
E^{n}_{(0,1)^{i+1}, (1,1)^{j}}(\mathscr{B}_2) = d_n + E^{n-1}_{(0,1)^{i}, (1,1)^{j}}\big(\mathscr{B}_2\setminus \{ (x_n, d_n)\} \big).
\]
Once again, we have $E^{n}_{(0,1)^{i+1}, (1,1)^{j}}(\mathscr{B}_2)$ is the maximum over the following three expressions:
\begin{align*}   
 E^{n}_{(0,1)^{i+1}, (1,1)^{j}}(\mathscr{B}_2) = \max \Big( &x_n + d_n + E^{n-1}_{(0,1)^{i+1}, (1,1)^{j-1}}\big(\mathscr{B}_2\setminus \{ (x_n, d_n) \} \big),\\
& d_n + E^{n-1}_{(0,1)^{i}, (1,1)^{j}}\big(\mathscr{B}_2\setminus \{ (x_n, d_n) \} \big),\\
& E^{n-1}_{(0,1)^{i+1}, (1,1)^{j}}\big(\mathscr{B}_2\setminus \{ (x_n, d_n) \} \big) \Big).
\end{align*}
We must show that this maximum cannot be attained in the first or third expression.  We prove this statement by contradiction.

Recall the following expression from (\ref{eq2}):
$ 
 E^{n}_{(0,1)^{i}, (1,1)^{j}}(\mathscr{B}_2)= \sum_{j\in J} (x_j + d_j) + \sum_{i\in I} d_i.
$
Here $n\in I$.  By our current assumption, we can write
$$
E^{n}_{(0,1)^{i+1}, (1,1)^{j}}(\mathscr{B}_2)= \sum_{j\in J_1} (x_j + d_j) + \sum_{i\in I_1} d_i,
$$
where $n\in J_1$.  Note that this only makes sense if $J_1\neq \emptyset$ (otherwise $x_n+d_n$ will not appear in the sum).  Since $n\in J_1\setminus J$, we can pick $t\in J\setminus J_1$.  Since $(x_t+d_t)+d_n \leq (x_n+d_n) + d_t,$ we can rewrite $ \sum_{j\in J} (x_j + d_j) + \sum_{i\in I} d_i$ as $\sum_{j\in J\cup\{n\}\setminus \{t\}} (x_j + d_j) + \sum_{i\in I\cup\{t\}\setminus \{n\} } d_i$ and we would be in the situation which we previously analyzed.\\
 
 $\bullet$ Assume that  $E^{n}_{(0,1)^{i+1}, (1,1)^{j}}\big(\mathscr{B}_2 \big) = E^{n-1}_{(0,1)^{i+1}, (1,1)^{j}}\big(\mathscr{B}_2\setminus \{ (x_n, d_n) \} \big)$. 
 Recall again the following expression from (\ref{eq2}):
$
 E^{n}_{(0,1)^{i}, (1,1)^{j}}(\mathscr{B}_2)= \sum_{j\in J} (x_j + d_j) + \sum_{i\in I} d_i,
$
where $n\in I$.  Additionally, we can write
$
E^{n}_{(0,1)^{i+1}, (1,1)^{j}}(\mathscr{B}_2)= \sum_{j\in J_1} (x_j + d_j) + \sum_{i\in I_1} d_i,
$
with $n\notin J_1\cup I_1$. Also, $s$ exists such that $s\in  J_1\cup I_1 \setminus  J\cup I$. 

If $d_s> d_n$, then
\[
\sum_{j\in J} (x_j + d_j) + \sum_{i\in I\cup\{s\}\setminus\{n\}} d_i> \sum_{j\in J} (x_j + d_j) + \sum_{i\in I} d_i,
\]
which is a contradiction, since the expression on the right is maximal.  If $d_s=d_n$, we may simply replace $n$ by $s$ in the sum for  $E^{n}_{(0,1)^{i}, (1,1)^{j}}(\mathscr{B}_2)$ and get that $E^{n}_{(0,1)^{i}, (1,1)^{j}}(\mathscr{B}_2)$ does not depend on $(x_n, d_n)$ and therefore
\[
E^{n}_{(0,1)^{i}, (1,1)^{j}}(\mathscr{B}_2) = E^{n-1}_{(0,1)^{i}, (1,1)^{j}}(\mathscr{B}_2\setminus\{(x_n, d_n)\}),
\]
which contradicts (\ref{eq1}).  Finally, assume that $d_s<d_n$.  If $s\in I_1$, then
\[
\sum_{j\in J_1} (x_j + d_j) + \sum_{i\in I_1\cup\{n\}\setminus\{s\}} d_i>\sum_{j\in J_1} (x_j + d_j) + \sum_{i\in I_1} d_i,
\]
which is once again a contradiction.  Now assume that $s\in J_1$. Since $x_s\leq x_n$, we have $x_s+d_s<x_n+d_n$ and therefore
\[
\sum_{j\in J_1\cup\{n\}\setminus\{s\}} (x_j + d_j) + \sum_{i\in I_1} d_i>\sum_{j\in J_1} (x_j + d_j) + \sum_{i\in I_1} d_i,
\]
which is a contradiction.


We conclude that  
\[
\begin{array}{lcl}
E^{n}_{(0,1)^{i+1}, (1,1)^{j}}(\mathscr{B}_2) &=& d_n + E^{n-1}_{(0,1)^{i}, (1,1)^{j}}\big(\mathscr{B}_2\setminus \{ (x_n, d_n)\} \big)\\
& <& d_n + E^{n-1}_{(0,1)^{i}, (1,1)^{j}} \big(\mathscr{B}_1\setminus \{ (x_n, d_n) \} \big) \\
&\leq&  E^{n}_{(0,1)^{i+1}, (1,1)^{j}}(\mathscr{B}_1). 
\end{array}
\]
We have shown that if $\mathscr{B}_1\neq \mathscr{B}_2$, we can find elementary 2-symmetric max-plus polynomials that distinguish $\mathscr{B}_1$ and $\mathscr{B}_2$. This completes the induction and the proof.
\end{proof}

\paragraph{Regularizing subsets of barcode space.}\label{regular}

In this paper, we consider subsets of barcode space defined as follows.  For a fixed $m > 0$, denote $\mathcal{B}_{\leq n}^m$ to be the subset of $\mathcal{B}_{\leq n}$ consisting of barcodes $(x_1, d_1, x_2, d_2,\ldots, x_n, d_n)$ with $d_i > 0$ such that 
\begin{equation}
\label{eqn:reg}
x_i \leq m d_i
\end{equation}
for $i=1, 2,\ldots, n$.  Given a set of finite barcodes, the value of $m$ is straightforward to determine.  While it might appear restrictive at first to consider only a subset of the barcode space, it does not actually pose any limitations in practice.  Indeed, for a finite data set generated by some finite process (e.g.,~meshes have a finite number of vertices/faces, images have limited resolution, etc.), there will only be finitely many bars, each with finite length, in the resulting barcode.  

\cite{TropCoord} gives an infinite list of functions required to distinguish any two barcodes.  Every function in this list is stable with respect to both the Wasserstein $p$- (for $p \geq 1$) and bottleneck distances.  However, in application settings, it is not possible (and quite frankly, does not make sense) to work with infinitely many functions since, in order to perform actual computations for any real data analysis, the data set in question must be finite.  Therefore, we work with subsets $\mathcal{B}_{\leq n}^m$ to ensure that we have an embedding into a finite-dimensional Euclidean space for computational feasibility.  The functions $E_{m, (1,0)^i, (1,1)^j}$ that we subsequently list in \eq{eq:trop_poly}, defined for a fixed $m$, are the finitely many functions we require to distinguish barcodes in $\mathcal{B}_{\leq n}^m$.  This restriction does not affect the Lipschitz continuity of the polynomials, which always holds, both in the regularizing subset given by (\ref{eqn:reg}) as well as in general barcode space.






\paragraph{Tropical coordinates on barcode space.}

Using Proposition \ref{minset} and restating the proof of Theorem 6.3 by~\cite{TropCoord} for the smaller family of functions, we establish that the following family of functions
\begin{equation}
\label{eq:trop_poly}
E_{m, (0, 1)^i, (1, 1)^j }(x_1, d_1, x_2, d_2, \ldots, x_n, d_n) :=E_{(0, 1)^i ,(1, 1)^j} (x_1 \oplus d_1^m, d_1, x_2 \oplus d_2^m, d_2, \ldots, x_n \oplus d_n^m, d_n)
\end{equation}
induces an injective map on $\mathcal{B}_{\leq n}^m$.  This collection of functions separates nonequivalent barcodes and are Lipschitz with respect to the Wasserstein $p$- and bottleneck distances \citep{TropCoord}.  We now give an example (with steps) for calculating these functions and evaluating them on barcodes. 

\begin{example}\label{example}
Fix $n = 2$. The set of orbits under the $S_2$ action is
$$
\mathscr E_2 / S_2 = \left\{ \begin{array}{c} \bigg[ \begin{pmatrix} 1 & 1\\ 1 & 1 \end{pmatrix} \bigg], \bigg[ \begin{pmatrix} 1 & 0\\ 1 & 1 \end{pmatrix} \bigg], \bigg[ \begin{pmatrix} 1 & 1\\ 0 & 1 \end{pmatrix} \bigg], \bigg[ \begin{pmatrix} 0 & 0\\ 1 & 1 \end{pmatrix} \bigg],\\ ~\\
\bigg[ \begin{pmatrix} 1 & 0\\ 1 & 0 \end{pmatrix} \bigg],
\bigg[ \begin{pmatrix} 1 & 0\\ 0 & 1 \end{pmatrix} \bigg],
\bigg[ \begin{pmatrix} 0 & 1\\ 0 & 1 \end{pmatrix} \bigg],
\bigg[ \begin{pmatrix} 0 & 1\\ 0 & 0 \end{pmatrix} \bigg],
\bigg[ \begin{pmatrix} 1 & 0\\ 0 & 0 \end{pmatrix} \bigg] \end{array} \right\}.
$$
According to Proposition~\ref{minset}, we need only consider the functions defined by the following orbits to separate barcodes when $n=2$:
\begin{align}
\label{orbits_ex}
\bigg[ \begin{pmatrix} 0 & 1\\ 0 & 0 \end{pmatrix} \bigg], \bigg[ \begin{pmatrix} 0 & 1\\ 0 & 1 \end{pmatrix} \bigg],  \bigg[ \begin{pmatrix} 0 & 0\\ 1 & 1 \end{pmatrix} \bigg], \bigg[ \begin{pmatrix} 1 & 1\\ 0 & 1 \end{pmatrix} \bigg], \bigg[ \begin{pmatrix} 1 & 1\\ 1 & 1 \end{pmatrix} \bigg].
\end{align}
Suppose that we have two barcodes $\mathscr B_1 = \big\{ (1,2), (3,1) \big\}$ and $\mathscr B_2 = \big\{ (2,2) \big\}$, where $\mathscr B_1, \mathscr B_2 \in \mathcal B_{\leq 2}.$
\begin{enumerate}[1.]
\item Compute $m$: For intervals $(1,2),\, (3,1),\, (2,2)$, find the smallest $m$ such that $x_i \leq m d_i$ for all $i$.  This $m$ is determined by rearranging (\ref{eqn:reg}) as $x_i \leq md_i \Leftrightarrow \frac{x_i}{d_i} \leq m$, and must be greater than the quotients $\frac{x_i}{d_i}$ for $i=1,2,3$.  These quotients are $\frac{1}{2}, \frac{3}{1}, 1$, and therefore we take $m = 3$.  Having determined $m$, it follows that $\mathscr B_1, \mathscr B_2 \in \mathcal B_{\leq 2}^3$.

\item Determine the 2-symmetric max-plus polynomials from the subcollection of orbits \eq{orbits_ex}:
\begin{align*}
E_{3,(0,1)}(x_1, d_1, x_2, d_2) &= E_{3,(0,1),(0,0)}(x_1, d_1, x_2, d_2)\\
&= d_1 \boxplus d_2\nonumber\\
&= \max\{d_1, d_2 \},\nonumber\\
E_{3,(0,1)^2}(x_1, d_1, x_2, d_2) &= E_{3,(0,1),(0,1)}(x_1, d_1, x_2, d_2)\\
&= d_1 d_2\nonumber\\
&= d_1 + d_2,\nonumber\\
E_{3,(1,1)}(x_1, d_1, x_2, d_2) &= E_{3,(0,0),(1,1)}(x_1, d_1, x_2, d_2)\\
&= (x_2 \oplus d_2^3)d_2 \boxplus (x_1 \oplus d_1^3)d_1\nonumber\\
&= \max\big\{ \min\{x_2, 3d_2 \} + d_2, \min\{ x_1, 3d_1 \} + d_1 \big\},\nonumber\\
E_{3,(0,1),(1,1)}(x_1, d_1, x_2, d_2) &= (x_1 \oplus d_1^3 )d_1d_2 \boxplus (x_2 \oplus d_2^3) d_2d_1\nonumber\\
&= \max\big\{\min\{ x_1, 3d_1 \} + d_1 + d_2, \min \{ x_2, 3d_2 \} + d_2 + d_1 \big\},\nonumber\\
E_{3,(1,1)^2}(x_1, d_1, x_2, d_2) &= E_{3,(1,1),(1,1)}(x_1, d_1, x_2, d_2)\\
&= (x_1 \oplus d_1^3)d_1(x_2 \oplus d_2^3)d_2\nonumber\\
&= \min \{ x_1, 3d_1 \} + d_1 + \min\{x_2, 3d_2 \} + d_2.\nonumber
\end{align*}

\item Evaluate on $\mathscr B_1$ and $\mathscr B_2$.


\end{enumerate}
The Euclidean-space vector representation of $\mathscr B_1$ and $\mathscr B_2$ is $(2, 3, 4, 6, 7)$ and $(2, 2, 4, 4, 4)$, respectively.
\end{example}


\section{Sufficient statistics and likelihoods for barcodes}
\label{sec:suff}

In this section, we motivate the need for sufficient statistics for persistent homology.  We also present our main result that tropical functions, defined according to the max-plus and min-plus semirings and listed in (\ref{eq:trop_poly}), are sufficient statistics for persistence barcodes.  Lastly, we discuss the implications of our result on statistical parametric inference by imposing the structure of exponential family distributions and defining likelihood functions for the tropical coordinatized representation of barcodes.


\subsection{Sufficient statistics and persistent homology}

Although a well-defined metric space, the space of barcodes exhibits a complex geometry with respect to the $L_2$-Wasserstein distance, it is an Alexandrov space with curvature bounded from below \citep{Turner2014}.  In the more conventional sense, this structure is theoretically not restrictive since under general Wasserstein $p$-distances, it is also a Polish space and admits well-defined characterizations of expectations, variances, and conditional probabilities \citep{0266-5611-27-12-124007}.  This means that formal probability measures and distributions may be formulated for persistence diagrams and barcodes.  Probability measures on the space of persistence diagrams have been of key interest for conducting statistical inference in TDA \citep[e.g.,][]{adler2011topological, carlsson2014, Blumberg2014}.  However, formulating explicit, parametric probability distributions directly on the space of persistence diagrams---under either the Wasserstein $p$-distance or bottleneck distance as the limiting case of the Wasserstein distance---remains a significant challenge. Although nonparametric methods for persistent homology exist \citep[e.g.,][]{Chazal:2014:SCP:2582112.2582128, Robinson2017}, in classical statistics, parametric modeling is the canonical starting point for data analysis and inference.  Fitting nonparametric models can be more computationally intensive than parametric methods, which is another drawback given that computing persistent homology does not scale well for very large data sets.  In particular, the upper size bound for Vietoris--Rips complexes based on $N$ vertices is $2^{\mathcal{O}(N)}$, and the computational complexity for persistent homology implemented by the standard algorithm \citep{elz-tps-02, ZC} is cubic in the number of simplices \citep[for a detailed survey on computing persistent homology, see][]{Otter2017}.  Furthermore, the existence of Fr\'echet means has been proven, but these are not unique \citep{Turner2014}.  It is therefore difficult to perform statistical analyses, particularly parametric inference, directly on the space of persistence diagrams.


\paragraph{Statistical sufficiency.}

The difficulty of defining explicit probability distributions directly on the space of barcodes is our main motivation for the present work.  To solve this problem, our strategy is to achieve sufficiency for barcodes via Euclidean coordinatization.  This way we can establish distributional forms for the coordinatized equivalents of barcodes, as long as our mapping is a sufficient statistic.  Given that the coordinatization is a topological embedding and the mappings are continuous and injective between barcode and Euclidean space, any parametric inference performed on these coordinatized equivalents naturally preserves all key properties in barcode space.  The property of injectivity also ensures the existence of an inversion.  Therefore, the results of parametric analyses on Euclidean space can theoretically be relayed back into barcode space by the inverse mappings.  Note, however, that this is not necessary because sufficiency guarantees that (for analytical purposes) all of the information from the original barcodes is retained by their coordinates.

Intuitively, a statistic is said to be {\em sufficient} for a family of probability distributions if the original sample from which the statistic was computed provides no more additional information on the underlying probability distribution, than does the statistic itself.  A subtlety to remark here is that the term ``statistic'' has a dual meaning: the same term is used to refer to the point estimate, i.e., evaluation of a function, as well as to the function itself (as am example, the mean is often used to refer to both the value obtained from computing the empirical average of a finite set of data points as well as the function itself, which is the sum of data values divided by the number of data points).  The concept of sufficiency retaining all information applies to both the statistic as a function of the data mapping to a codomain as well as the statistic as an evaluation of such a function.  In the former case, sufficiency refers to a property of the statistic with respect to probability measures on the corresponding domain and codomain, where the exact distribution may be unknown but the spaces nevertheless admit well-defined probability measures.  In the latter case, sufficiency refers to the purpose of determining the exact probability distribution that generated the observed data, which is a key aim of parametric inference.  In this case, knowing a sufficient statistic contains all of the information needed to compute any estimator of the parameter for the assumed underlying distribution.

\setcounter{theorem}{6}
\begin{definition}
Let $X$ be a vector of observations of size $n$ whose components $X_i$ are independent and identically distributed ({\em i.i.d.}).  Let $\vartheta$ be the parameter that characterizes the underlying probability distribution that generates $X_i$.  A statistic $T(X)$ is {\em sufficient} for the parameter $\vartheta$ if the conditional probability of observing $X$, given $T(X)$, is independent of $\vartheta$.  Equivalently,
\begin{equation*}
\mathbb{P}(X = x \mid T(X) = t, \vartheta) = \mathbb{P}(X = x \mid T(X) = t).
\end{equation*}
\end{definition}
\noindent The above definition applies to point evaluations of the statistic $T(X)$.  From the measure-theoretic perspective, a statistic $T(\cdot)$ is sufficient for a given set $\mathcal M$ of probability measures on a $\sigma$-algebra $\mathbb{S}$ if the conditional probability of some subset $E \subseteq \mathbf{S}$, given a value $y$ of $T(\cdot)$, is invariant for every probability measure in $\mu \in \mathcal M$. 

We now recall an important result in the mathematical statistics literature for assessing statistical sufficiency: the factorization criterion for sufficient statistics \citep{Fisher309, Neyman}. The following theorem is classically used as a definition for checking sufficiency, in particular, for statistics that are point estimates.

\setcounter{theorem}{0}
\begin{theorem}[Fisher--Neyman factorization criterion]
\label{thm:fisher_neyman}
If the probability density function for the observed data is $f(x; \vartheta)$, then the statistic $T = T(x)$ is sufficient for $\vartheta \in \Theta$ if and only if nonnegative functions $g$ and $h$ exist such that
\begin{equation}
f(x; \vartheta) = h(x)g\big( T(x) ; \vartheta \big).\label{eqn:fisher_neyman}
\end{equation}
Namely, the density $f(\cdot)$ can be factored into a product where one factor $h(\cdot)$ is independent of $\vartheta$, and the other factor $g(\cdot)$ depends on $\vartheta$ and only depends on $x$ only through $T(\cdot)$.
\end{theorem}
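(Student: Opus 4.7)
The plan is to establish the biconditional by proving each direction in turn, treating the discrete case first for transparency and then indicating how the measure-theoretic general case is handled by a Radon--Nikodym argument. Throughout, the core identity to exploit is that $\{X = x\} \subseteq \{T(X) = T(x)\}$, which lets me decompose the joint law into a piece depending on $T(x)$ only and a piece that will absorb the conditional distribution.

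For the forward direction, suppose $T$ is sufficient. I would write
\[
f(x; \vartheta) = \mathbb{P}_\vartheta(X = x) = \mathbb{P}_\vartheta\bigl(X = x,\, T(X) = T(x)\bigr) = \mathbb{P}_\vartheta\bigl(X = x \mid T(X) = T(x)\bigr)\, \mathbb{P}_\vartheta\bigl(T(X) = T(x)\bigr).
\]
By the assumed sufficiency of $T$, the first factor does not depend on $\vartheta$, so I define $h(x) := \mathbb{P}\bigl(X = x \mid T(X) = T(x)\bigr)$ and $g(t; \vartheta) := \mathbb{P}_\vartheta\bigl(T(X) = t\bigr)$. Both functions are nonnegative, and the identity above gives precisely the factorization \eqref{eqn:fisher_neyman}.

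For the reverse direction, assume the factorization $f(x; \vartheta) = h(x)\, g(T(x); \vartheta)$ holds. I would compute the conditional probability directly:
\[
\mathbb{P}_\vartheta\bigl(X = x \mid T(X) = t\bigr) = \frac{\mathbb{P}_\vartheta(X = x,\, T(X) = t)}{\mathbb{P}_\vartheta(T(X) = t)} = \frac{\mathbf{1}\{T(x) = t\}\, h(x)\, g(t; \vartheta)}{\sum_{y:\,T(y) = t} h(y)\, g(t; \vartheta)}.
\]
The $g(t; \vartheta)$ factor cancels, leaving $\mathbf{1}\{T(x) = t\}\, h(x) \big/ \sum_{y:\,T(y)=t} h(y)$, which manifestly does not depend on $\vartheta$. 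This yields the conditional-probability characterization in the definition of sufficiency.

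The main obstacle in writing this up rigorously is the transition from the discrete case to absolutely continuous or general dominated families: the conditional probability $\mathbb{P}_\vartheta(X \in A \mid T(X))$ must be interpreted as a regular conditional probability, and $g(T(x); \vartheta)$ must be understood as a $\sigma(T)$-measurable Radon--Nikodym derivative with respect to a common dominating $\sigma$-finite measure. The standard workaround is the Halmos--Savage theorem, which guarantees a dominating measure for any family that admits a sufficient statistic, after which the discrete argument transfers almost verbatim with sums replaced by integrals. Since the parametric families used later in the paper (exponential families on the tropical Euclidean coordinates) admit densities with respect to Lebesgue measure, I would in practice state and prove the density version and note the measure-theoretic extension as a remark.
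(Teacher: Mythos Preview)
Your proof is the standard textbook argument and is correct. However, the paper does not actually prove this statement: Theorem~\ref{thm:fisher_neyman} is stated without proof as a classical result attributed to \cite{Fisher309} and \cite{Neyman}, and is invoked only as a tool (together with its measure-theoretic refinement, Theorem~\ref{thm:halmos_savage} due to Halmos--Savage) in the proof of the paper's main result, Theorem~\ref{sufficient}. So there is no paper-side proof to compare against; your write-up would serve as a self-contained justification the paper chose to omit.
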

This factorization need not be unique for any given distribution.  For example, to see that the identity function $T(X) = X$ is a sufficient statistic, we may factorize by setting $h(x) = 1$, and then $f(x; \vartheta)= g\big(T(x); \vartheta \big)$ for all $x$.  This is the trivial sufficient statistic and is interpreted as the data themselves being sufficient for the data.

\begin{example}[normal distribution: unknown mean $\mu$, known variance $\sigma^2$]
We denote the known parameter by the subscript $\sigma$.  The normal probability density is
$$
f_\sigma(x; \mu) = \frac{1}{\sqrt{2\pi\sigma^2}}\exp\bigg\{ - \frac{(x - \mu)^2}{2\sigma^2} \bigg\}.
$$
By setting
$$
h_\sigma(x) := \frac{1}{\sqrt{2\pi\sigma^2}}\bigg\{ - \frac{x^2}{2\sigma^2} \bigg\},\mbox{~~~}
T_\sigma(x) := \frac{x}{\sigma},\mbox{~~~}
g_\sigma(y) := \exp\bigg\{ \frac{\mu}{\sigma} \cdot y - \frac{\mu^2}{2\sigma^2} \bigg\}
$$
and substituting in (\ref{eqn:fisher_neyman}) with $y = T_\sigma(x)$, we get
$$
f_\sigma(x;\mu) = \frac{1}{\sqrt{2\pi\sigma^2}}\exp\bigg\{ - \frac{x^2}{2\sigma^2} \bigg\} \cdot \exp\bigg\{ \frac{\mu}{\sigma} \cdot \frac{x}{\sigma} - \frac{\mu^2}{2\sigma^2} \bigg\}
= \frac{1}{\sqrt{2\pi\sigma^2}}\exp\bigg\{ -\frac{(x-\mu)^2}{2\sigma^2} \bigg\}
$$
as desired.

\end{example}

Theorem \ref{thm:fisher_neyman} is valid for arbitrary families of distributions when studying the sufficiency of point estimates.  The following measure-theoretic reinterpretation asserts the generality of the theorem, particularly in the case where sufficiency refers to statistics on probability measures \citep{halmos1949}.  Essentially, the result depends on the assumption that the family of probability measures $\mathcal P_\vartheta$ is dominated by a single $\sigma$-finite measure.  In the case of discrete variables, this amounts to the existence of a countable set $S$, which does not depend on $\vartheta$, for which $\mathcal{P}_\vartheta(S) = 1$ for all $\vartheta \in \Theta$.  In the case of absolutely continuous variables, this amounts to the measurability of the statistic.

\begin{theorem}[\cite{halmos1949}]
\label{thm:halmos_savage}
A necessary and sufficient condition that the statistic $T(\cdot)$ be sufficient for a dominated set $\mathcal M$ of measures on a $\sigma$-algebra $\mathbb{S}$ is that for every $\mu \in \mathcal M$, the density $f_\mu := \diff\mu/\diff\lambda$ (where $\diff\mu/\diff\lambda$ is the Radon--Nikodym derivative) admits the factorization
$$
f_\mu(x) = h(x)g_\mu\big( T(x) \big),
$$
where $h(\cdot) \not= 0$ is a measurable function with respect to $\mathbb{S}$, and $g_\mu(\cdot)$ is a measurable function with respect to the codomain $\mathcal T$ of $T(\cdot)$.
\end{theorem}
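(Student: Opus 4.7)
The plan is to prove the biconditional in two stages, with the crucial preliminary reduction that a dominated family can be replaced by a single equivalent probability measure built from a countable subfamily of $\mathcal{M}$ itself.

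First I would establish this reduction, which is essentially the technical heart of the Halmos--Savage paper: since $\mathcal{M}$ is dominated by a $\sigma$-finite $\lambda$, one can produce a countable collection $\{\mu_i\} \subset \mathcal{M}$ and positive weights $c_i$ with $\sum_i c_i = 1$ such that $\lambda^* := \sum_i c_i \mu_i$ satisfies $\mu \ll \lambda^*$ for every $\mu \in \mathcal{M}$ and $\lambda^* \ll \lambda$. The existence of such a countable ``equivalent'' subfamily follows from a maximality argument (Zorn's lemma) applied to collections of pairwise ``mutually singular'' components relative to $\lambda$. With $\lambda^*$ in hand, the Radon--Nikodym machinery can be transferred from $\lambda$ to $\lambda^*$, and $d\lambda^*/d\lambda$ will ultimately serve as the $\mu$-free factor $h$.

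For the ``sufficiency implies factorization'' direction, assume a common version of $P(\,\cdot\,\mid T)$ works for every $\mu \in \mathcal{M}$. The key observation is that the Radon--Nikodym derivative $d\mu/d\lambda^*$ then admits a $\sigma(T)$-measurable version: for measurable $A$ and $T$-measurable $B$ one has $\mu(A\cap T^{-1}(B)) = \int_{T^{-1}(B)} P(A\mid T)\, d\mu$, and since the conditional probability does not depend on $\mu$, the density must factor through $T$ almost everywhere. Writing $d\mu/d\lambda = (d\mu/d\lambda^*)(d\lambda^*/d\lambda)$ then yields the desired factorization with $h := d\lambda^*/d\lambda$ and $g_\mu \circ T := d\mu/d\lambda^*$. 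Conversely, assume $f_\mu = h \cdot (g_\mu \circ T)$. I would exhibit a $\mu$-free version of $P_\mu(A\mid T)$ by direct computation: for $B$ in the codomain $\sigma$-algebra,
\begin{equation*}
\int_{T^{-1}(B)} \mathbf{1}_A\, d\mu \;=\; \int_{T^{-1}(B)} \mathbf{1}_A(x)\, h(x)\, g_\mu(T(x))\, d\lambda(x),
\end{equation*}
and applying a disintegration of the measure $h\, d\lambda$ along the fibers of $T$ produces a $T$-measurable conditional that the $g_\mu(T(x))$ factor cannot affect, since it pulls out of the fiberwise integral.

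The main obstacle is the countable-reduction step. Without it, the forward direction is genuinely hard: one cannot uniformly differentiate an uncountable dominated family against a single $\sigma$-finite $\lambda$ and extract a common $T$-measurable density, and the converse loses its clean disintegration interpretation. Everything else in the argument reduces to careful bookkeeping with Radon--Nikodym derivatives and the tower property of conditional expectation.
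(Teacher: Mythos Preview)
The paper does not prove this theorem at all: it is stated as a classical result attributed to Halmos and Savage (1949) and is invoked without proof as a black box in the subsequent proof of Theorem~\ref{sufficient}. There is therefore no ``paper's own proof'' against which to compare your proposal.

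That said, your outline is a faithful sketch of the original Halmos--Savage argument. The reduction to a countable equivalent subfamily and the resulting pivot measure $\lambda^*$ is indeed the technical core (their Lemma~7), and your identification of $h = d\lambda^*/d\lambda$ and $g_\mu \circ T = d\mu/d\lambda^*$ is exactly how the factorization is obtained in the forward direction. The one place to be careful is the converse: you do not need a full disintegration of $h\,d\lambda$ along fibers of $T$; it suffices to check directly that the ratio $\mathbb{E}_{\lambda^*}[\mathbf{1}_A \mid \sigma(T)]$ serves as a common conditional probability for every $\mu$, which follows from the chain rule for Radon--Nikodym derivatives and the fact that $g_\mu \circ T$ is $\sigma(T)$-measurable and hence pulls out of the conditional expectation. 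Invoking disintegration is heavier machinery than the situation demands and requires additional standing hypotheses (standard Borel structure) that the bare measure-theoretic statement does not assume.
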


As will be shown subsequently, in our case where the statistic is given by tropical polynomials in \eq{eq:trop_poly}, $\mathbf{S}$ corresponds to the space of persistence barcodes $\mathcal B_{\leq n}^m$, while $\mathcal T$ is Euclidean space.  Although the above theorem holds in broader generality, the case of absolutely continuous variables is sufficient for our setting since the tropical polynomials are Lipschitz continuous and are therefore absolutely continuous as well.


\paragraph{Sufficient statistics for persistence barcodes.}

Given the existence of parametric distributions and probability measures on barcode space \citep{0266-5611-27-12-124007}, our main theorem ascertains that the map from $\mathcal{B}_{\leq n}^m$, as an embedding into Euclidean space via tropical functions, is a sufficient statistic.  Sufficiency for statistics is classically asserted by the injectivity of continuous mappings \cite[e.g.,][]{halmos1949, browder1992mathematics}.  We follow this same approach in our proof.

\begin{theorem}
\label{sufficient}
Consider the subset $\mathcal{B}_{\leq n}^m$ of $\mathcal{B}_{\leq n}$ consisting of barcodes $[(x_1, d_1, x_2, d_2,\ldots, x_n, d_n)]$ with $d_i > 0$ such that $x_i \leq m d_i$ for some fixed $m$ and $i=1, 2, \ldots, n$.  Let $\mathscr P_\Theta$ be the family of probability measures on $\mathcal{B}_{\leq n}^m$.  The map defined by
\begin{align}
T: \mathcal{B}_{\leq n}^m & \rightarrow \mathbb{R}^d \nonumber\\
\mathscr{B} & \mapsto \big( E_{m, (0, 1)^i, (1, 1)^j }( x_1, d_1, x_2, d_2, \ldots, x_n, d_n) \big)_{i +j \in \mathbf{N}_{\leq n}}(\mathscr{B}) \label{eqn:suff_stat},
\end{align}
where $d$ is the number of orbits used to define separating functions, is a sufficient statistic for all $P_\vartheta \in \mathscr{P}_\Theta$.
\end{theorem}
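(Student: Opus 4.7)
The plan is to appeal to the measure-theoretic factorization criterion of Halmos and Savage (Theorem \ref{thm:halmos_savage}). The guiding observation is that an injective measurable statistic is automatically sufficient: once $T(\mathscr{B})$ is observed, no further randomness in $\mathscr{B}$ remains, so the conditional law of $\mathscr{B}$ given $T(\mathscr{B})$ is degenerate and hence independent of the parameter $\vartheta$.

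First, I would verify that $T$ is injective on $\mathcal{B}_{\leq n}^m$. In the max-plus semiring, $d_i^m = m\cdot d_i$ and $x_i \oplus d_i^m = \min(x_i, m d_i)$; the regularizing condition $x_i \leq m d_i$ therefore forces $x_i \oplus d_i^m = x_i$ throughout $\mathcal{B}_{\leq n}^m$, so
\[
E_{m,(0,1)^i,(1,1)^j}(\mathscr{B}) = E^n_{(0,1)^i,(1,1)^j}(\mathscr{B})
\]
for every $\mathscr{B} \in \mathcal{B}_{\leq n}^m$. Proposition \ref{minset} then asserts that the two-parameter family on the right separates orbits under the $S_n$-action, which is precisely injectivity of $T$. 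Next, I would invoke the result of \cite{TropCoord} that each coordinate of $T$ is Lipschitz with respect to both the Wasserstein $p$- and bottleneck distances, so $T$ is Borel measurable. Combined with the Polish structure on barcode space from \cite{0266-5611-27-12-124007}, the Lusin--Souslin theorem for injective Borel maps between standard Borel spaces ensures that $T^{-1}$ restricted to the image $T(\mathcal{B}_{\leq n}^m)$ is also Borel measurable.

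With these ingredients in place, the factorization is routine. For each $P_\vartheta \in \mathscr{P}_\Theta$ with Radon--Nikodym density $f_\vartheta = \diff P_\vartheta / \diff \lambda$ against a common $\sigma$-finite dominating measure $\lambda$, I would set $h(\mathscr{B}) := 1$ and
\[
g_\vartheta(t) := \begin{cases} f_\vartheta\bigl(T^{-1}(t)\bigr), & t \in T(\mathcal{B}_{\leq n}^m), \\ 0, & \text{otherwise,}\end{cases}
\]
so that $f_\vartheta(\mathscr{B}) = h(\mathscr{B}) \cdot g_\vartheta\bigl(T(\mathscr{B})\bigr)$ on $\mathcal{B}_{\leq n}^m$, which is exactly the form required by Theorem \ref{thm:halmos_savage}. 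The main obstacle in executing this plan is not the factorization itself, which is essentially bookkeeping once injectivity is in hand, but rather the supporting measure-theoretic detail that $T^{-1}$ on the image is Borel measurable, so that $g_\vartheta$ is measurable as needed. This is secured by Lusin--Souslin applied to the Polish barcode space of \cite{0266-5611-27-12-124007}, or alternatively by the absolute continuity of $T$ emphasized in the remark following Theorem \ref{thm:halmos_savage}; either route reduces the theorem to the injectivity statement of Proposition \ref{minset}.
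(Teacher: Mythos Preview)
Your proposal is correct and follows essentially the same route as the paper: both arguments reduce sufficiency to the Halmos--Savage factorization with $h\equiv 1$ and $g_\vartheta = f_\vartheta \circ T^{-1}$, using injectivity from Proposition~\ref{minset}, Lipschitz continuity of $T$ for its Borel measurability, and a descriptive set-theoretic result (you invoke Lusin--Souslin, the paper cites the equivalent Kuratowski theorem from Parthasarathy) to secure measurability of the inverse. Your explicit observation that the regularizing condition $x_i \le m d_i$ collapses $x_i \oplus d_i^m$ to $x_i$, so that $E_{m,(0,1)^i,(1,1)^j} = E^n_{(0,1)^i,(1,1)^j}$ on $\mathcal{B}_{\leq n}^m$, is a helpful clarification that the paper leaves implicit in the proof itself.
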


\begin{proof}
Denote the image of $\mathcal{B}_{\leq n}^m$ under $\big( E_{m, (0, 1)^i, (1, 1)^j }(x_1, d_1, x_2, d_2, \ldots, x_n, d_n) \big)_{i +j \in \mathbf{N}_{\leq n}}$ by $\mathcal{T}\subseteq \mathbb{R}^d$.  On $\mathcal{B}_{\leq n}^m$, we take the bottleneck distance, and on $\mathcal{T}$, we take the standard Euclidean distance.  We can view $\mathcal{B}_{\leq n}^m$ as a measurable space by taking the Borel $\sigma$-algebra to be the smallest $\sigma$-algebra that contains all open sets in the metric space $(\mathcal{B}_{\leq n}^m, d_{\infty}^B)$ with topology generated by open balls.  The same goes for $\mathcal{T}$ with standard Euclidean distance \citep[Chapter 1]{1967}.  Both metric spaces are separable \citep{0266-5611-27-12-124007} and complete ($\mathcal{B}_{\leq n}^m$ is complete since it is a closed subset of the barcode space).



To apply Theorem~\ref{thm:halmos_savage} and show that $T$ generates sufficient statistics for barcodes on $\mathcal{B}_{\leq n}^m$, we use the property that the map $T$ is injective, proven in Proposition~\ref{minset}.  By definition, this means that there exists some $\varphi \colon \mathcal{T} \to \mathcal{B}_{\leq n}^m$ such that $\varphi \circ T = \textrm{Id}_{\mathcal{B}_{\leq n}^m}$ and $T \circ \varphi = \textrm{Id}_{\mathcal{T}}$.  For notational simplicity, we denote $f_\vartheta(\mathscr B) := f(\mathscr B; \vartheta)$ and $g_\vartheta(T(\mathscr B)) := g(T(\mathscr B); \vartheta)$.  The condition of Theorem~\ref{thm:halmos_savage} holds for $T$ where $g_\vartheta = f_\vartheta \circ \varphi$:
\[
\begin{array}{lcl}
f_\vartheta (\mathscr B)  &= & f_\vartheta \big(\varphi\big(T(\mathscr B) \big)\big), \\
&=& f_\vartheta \circ \varphi\big(T(\mathscr B) \big),\\ 
&=& g_\vartheta\big(T(\mathscr B) \big).
\end{array}
\]
\noindent We now proceed to verify that all the relevant functions are measurable, i.e., that for each of them, the preimage of any measurable set is measurable.  The function $h(\mathscr B) \equiv 1$ is a constant and therefore continuous, from where it follows that it is measurable \citep[Theorem 1.5 in Chapter 1]{1967}.  To show that $f_\vartheta$ is measurable, notice that $g_\vartheta = f_\vartheta \circ \varphi$.  Since $g_\vartheta$ is measurable by assumption (Theorem \ref{thm:halmos_savage}), it will be sufficient to show $\varphi \colon \mathcal{T} \to \mathcal{B}_{\leq n}^m$ is measurable.  Since the map $T$ is Lipschitz continuous with respect to the bottleneck, Wasserstein $p$-, and the standard Euclidean distances, it is measurable with respect to Borel $\sigma$-algebras as specified previously \citep[Theorem 1.5 in Chapter 1]{1967}.  By the Kuratowski theorem \citep[Corollary 3.3 in Chapter 3]{1967}, the inverse of an injective, measurable map between separable metric spaces equipped with Borel $\sigma$-algebras is measurable, which leads to the conclusion that $\varphi$ is Borel measurable.

This proves the result when $\mathcal{B}_{\leq n}^m$ is equipped with the bottleneck distance.  A verbatim proof provides the same result for all Wasserstein-$p$ distances.
\end{proof}

As previously mentioned, the dimension $d$ of the codomain $\mathbb{R}^d$ for the tropical mappings in \eq{eqn:suff_stat} need not equal the number of all orbits given by the symmetric group action, as a result of Proposition~\ref{minset}.  The dimension $d$ is determined by the restriction that $i + j \in \mathbb{N}_{\leq n}$, where $n$ is the number of bars in the largest barcode, which simplifies to $2d = 2n + n(n+1)$.

\subsection{Exponential families and likelihood functions for persistent homology}


Sufficient statistics are strictly parametric by nature and thus are most relevant in studies centered around inference.  Parametric inference is carried out under the assumption that the parameter space is necessarily finite and therefore bounded with respect to sample size.  The only class of distributions that satisfies this restriction is known as the {\em exponential family} \citep{darmois,koopman,pitman1936}, generated by a particular case of Theorem~\ref{thm:fisher_neyman}.

For many applications, data can be appropriately modeled using exponential family distributions.  This is especially true for biological applications, which is the focus of the case study presented in Section \ref{sec:app}.  Here, Gaussian distributions are commonly assumed for many quantitative traits such as human height \citep[e.g.,][]{Turchin:2012aa,Yang:2012aa} and weight \citep[e.g.,][]{Willer:2009aa}; while dichotomous phenotypes, such as disease status, are usually modeled as Bernoulli random variables \citep[e.g.,][]{Consortium:2007aa}.  Counts, such as gene expression data, are often modeled as Poisson or negative binomial distributions \citep[e.g.,][]{Anders:2010aa,Love:2016aa}.  These distributions are all members of the exponential family; indeed, many more important distributions, such as the exponential, geometric, gamma, beta, chi-squared, and Dirichlet distributions are also members of the exponential family.  The application of such distributions also extend beyond biology and arise in many other scientific disciplines (e.g., quality control).

\setcounter{theorem}{7}
\begin{definition}
Let the following restriction be applied to \eq{eqn:fisher_neyman} in Theorem~\ref{thm:fisher_neyman},
\begin{equation}
\label{eqn:exp_fam}
g( T(x) ; \vartheta) = \exp\big\{ \langle \eta(\vartheta),  T(x) \rangle - A(\vartheta) \big\},
\end{equation}
where $\langle \cdot, \cdot \rangle$ denotes the usual inner product in Euclidean space, and $\eta(\vartheta)$, $T(x)$, and $A(\vartheta)$ are known functions.  The resulting class of probability distributions spanned by \eq{eqn:fisher_neyman}, with $g(\bullet)$ given by (\ref{eqn:exp_fam}), is known as the {\em exponential family} of probability densities.
\end{definition}

Though less general than the form in (\ref{eqn:fisher_neyman}), the exponential family is nevertheless an important class of distributions that is comprised of many of the most common distributions in statistics.  Given an observation $x$, the {\em likelihood function} $\mathscr L(\cdot)$ is a function of the parameter $\vartheta$ from the probability distribution $f_\vartheta$ that generates the observation within the sample.  Namely,
\begin{align}
\mathscr{L}(\vartheta \mid x) & = f_\vartheta(x) \nonumber\\
& = h(x)g( T(x); \vartheta), \label{eqn:mle_suff}
\end{align}
for any given sufficient statistic.  For an exponential family model, if $g(\cdot)$ is given by (\ref{eqn:exp_fam}) and we denote $a(\vartheta) := \exp\{-A(\vartheta)\}$, then we obtain
\begin{equation}
\label{eqn:mle_exp}
\mathscr{L}(\vartheta \mid x) = h(x)a(\vartheta)\exp\big\{ \langle \eta(\vartheta), T(x) \rangle \big\}.
\end{equation}


\paragraph{Joint likelihoods for persistence barcodes.}

We now extend the above formulation to the multivariate case and, in particular, we consider the case for multiple barcodes.  In this setting, we denote collections of $K$ barcodes by $\mathscrbf{B} = (\mathscr B_1, \mathscr B_2, \ldots, \mathscr B_K)$, the tropicalized versions of these collections by $T(\mathscrbf{B})$, and the collection of distribution parameters by $\bm{\vartheta} = (\vartheta_1,\ldots,\vartheta_K)$. We will use $\mathscr{B}$, $T(\mathscr{B})$, and $\vartheta$ to notate singular (or univariate) elements of these vectors, respectively.  For $K$ barcodes marginally distributed according to $f_{\bm{\vartheta}}$ as defined by \eq{eqn:mle_suff}, the joint likelihood for the $K$ observations may be written as
\begin{align}
\label{eqn:mle_trop_suff}
\mathscr{L}(\bm{\vartheta} \mid \mathscrbf{B}) & = f_{\bm{\vartheta}}(\mathscrbf{B}) \nonumber\\
& = h(\mathscrbf{B})g(T(\mathscrbf{B}); \bm{\vartheta}),
\end{align}
with expectation $\mathbb{E}[\mathscrbf{B}]) = \bm{\mu}$ and some general covariance structure $\mathbb{V}[\mathscrbf{B}] = \bm{\Sigma}$ between the barcodes.  When $f_{\bm{\vartheta}}$ belongs to the exponential family, as denoted in (\ref{eqn:mle_exp}), the joint likelihood for the $K$ observations may be written as the following:
\begin{equation}
\mathscr{L}(\bm{\vartheta} \mid \mathscrbf{B}) = a(\bm{\vartheta})^K \exp\big\{ \big\langle \eta(\bm{\vartheta}), T(\mathscrbf{B}) \big\rangle \big\}.
\label{eqn:mle_trop_exp}
\end{equation}
Since the form of our tropical sufficient statistic (\ref{eqn:suff_stat}) is a vector in Euclidean space for each barcode, this quantity is well defined.  Most importantly, the inner product is readily computable in Euclidean space with respect to the natural Euclidean metric (as opposed to the space of barcodes which does not yield a natural definition for an inner product due to its complex geometry, as discussed above).



\section{Application: Studying evolutionary behavior in viral data sets}
\label{sec:app}

We now demonstrate the utility of our sufficiency result in applications regarding two infectious viral diseases: HIV and avian influenza.  More specifically, we study both dimensions 0 and 1 of persistent homology.  In dimension 0, we concretely demonstrate sufficiency in the preservation of biological characteristics of HIV.  In dimension 1, we impose the parametric structure of an exponential distributive family to carry out pairwise comparisons between the marginal distributions of intra- and inter-subtype reassortment in avian influenza.

\subsection{Demonstrating Sufficiency in HIV Transmission Clustering}
\label{subsec:hiv}

HIV transmission clusters represent groups of epidemiologically-related individuals who share a common recent viral ancestor \citep{Hue22032005, pmid28353537}.  Recently, \cite{juan1} reported the existence of a prominent HIV transmission cluster affecting more than 100 people from population of men who have sex with men (MSM) in Valencia, Spain. 

Since epidemiologically related groups are often characterized by the identification of small pairwise genetic distances, sequences derived from a transmission cluster can be easily differentiated from those derived from unrelated patients by statistically assessing their clustering behavior (see Figure \ref{Fig1}).  Using persistent homology, dimension 0 ($\text{PH}_0$) captures the clustering of sequences based on vertical evolution \citep[i.e., the amount of evolution explained solely by mutation][]{Chan12112013}.  This is shown via bottleneck distances in Figure \ref{Fig2}.  This particular example serves as a real-data example to demonstrate that tropical coordinatization retains all information contained in barcodes.  Specifically, we show that no recombination events occurred in this specific HIV data set, indicated by the similar clustering pattern shown among the phylogenetic trees obtained for each independent segment of analyzed viral samples.  More generally, persistent homology not only provides information on clusters that would also be recovered by single-linkage clustering in dimension $0$, but it also provides relevant information on the potential for higher dimensional persistence intervals to appear.  Higher dimensional persistence intervals would suggest that there exists recombination or reassortment events.  Such information cannot be obtained from single-linkage clustering alone.


In order to demonstrate sufficiency and verify that the tropical representations of the barcodes retain the same biologically-relevant information in dimension 0, we generated 10 random subsets of 30 polymerase sequences from (i) the MSM transmission cluster, (ii) the reference data set (i.e., the unrelated patients), and (iii) sequences from both the transmission cluster and the reference dataset.  In total 10 $\times$ 30 sequences were analyzed.  Matrices of pairwise genetic distances were obtained from each subset.  The entries of these matrices are evolutionary distances between sequences, calculated from observed differences between sequences using evolutionary models (see Appendix \ref{appendix:data} for further details).

Obtaining phylogenetic trees using distance-based methods as described above can be viewed analogously to embedding a general metric space (which in our case, is spanned by distances between gene loci) into tree metrics.  This is a specific case of a more general research problem in the theory of finite metric spaces \citep{ABN07tree}.  As in previous work \citep{Chan12112013}, the underlying topological space we consider is the phylogenetic tree generated by the genetic distance matrix.  A phylogenetic tree is a specific subgraph defined by a set of vertices called {\em leaves} and a set of edges called {\em branches}---each with positive length, representing evolutionary time.  A leaf is a vertex with degree 1, a root is a vertex with degree 2, and nonleaf vertices have degree of at least 2.  The graph in consideration is the topological space arising from a usual graph $G = (V,E)$, where $V$ denotes the set of vertices and $E$ is the set of edges.  A graph as a topological space can be obtained from a usual graph by replacing the vertices by points and the edges $e = st$ by a copy of the unit interval $[0,1]$, where 0 identifies with the point associated to the vertex $s$ and 1 identifies with the point associated to the vertex $t$.  As a topological space, a graph is the simplicial $1$-complex generated by $G$ \citep{hatcher2002algebraic}. Further details in mathematical phylogenetics from the perspective of algebraic statistics can be found in \cite{pachter2005algebraic}, and from the perspective of topology and metric geometry in \cite{lin2018tropical}.  Since the subgraph of a graph as topological space, is itself a topological space, a phylogenetic tree can be viewed as the underlying topological space \citep{Chan12112013}.  In our setting, leaves in a phylogenetic tree are viral sequences sampled from individuals, and edges are genetic distances between them.


We obtained the dimension 0 Vietoris--Rips persistence barcodes using Ripser \citep{ripser} and evaluated them via the tropical functions described in Section \ref{subsec:coords}.  The resulting barcodes (and their post-transformed Euclidean representations) from subsets (i) and (ii) provide information on intragroup distances, while those from subset (iii) provide information on intergroup distances (i.e., transmission cluster versus unrelated patients).  Figures \ref{Fig1} and \ref{newFig2} show that the three subsets can be easily differentiated, thus demonstrating that the tropical coordinatization of the barcodes are sufficient statistics for epidemiologically related individuals. 

\subsection{Analyzing intra- and intersubtype reassortment in avian influenza}
\label{subsec:bird_flu}

The influenza virus presents a genome that consists of eight different ribonucleic acid (RNA) molecules (segments).  Consequently, genetic reassortment---that is, the process of swapping gene segments---is an important evolutionary process that redistributes genetic variability.  Subtype classification of influenza is based on the analysis of two surface proteins: hemagglutinin (HA) and neuraminidase (NA).  There exist 18 types of HA (indexed from 1 to 18) and 11 types of NA (indexed from 1 to 11).  Gene reassortment can occur either between viruses from the same subtype \citep[i.e., intrasubtype reassortment][]{pmid28034300}, or between viruses from different subtypes \citep[i.e., intersubtype reassortment][]{10.1371/journal.pone.0149608}.  From a broad perspective, detecting gene reassortment is key to understanding mutations within the evolutionary dynamics of viruses.

Traditionally, the detection of reassortment events is based on molecular phylogenetic analyses, in which different genomic regions (i.e., segments) are analyzed independently.  Specific details on this procedure given in Appendix~\ref{appendix:trees}.  As mentioned previously, if no reassortment has occurred, phylogenetic trees obtained for each independent segment show a similar clustering pattern of the viral samples that are analyzed.  This indicates that all segments share the same evolutionary history.  Alternatively, when a reassortment event occurs, it is detected by a segment generating a phylogenetic tree with a differing clustering pattern.  This indicates the existence of segment swapping \citep{PEREZLOSADA2015296}.  The detection and quantification of reassortment events with phylogenetic approaches is a slow and tedious process, partly due to the fact that obtaining accurate phylogenetic trees can take days, or even weeks, when hundreds or thousands of samples are analyzed \citep{pmid28034300}.  The use of persistent homology in detecting genetic reassortment events poses a clear advantage over phylogenetic analysis, not only due to the increased computational speed, but also because of the enhanced interpretability: dimension 1 persistence intervals provide explicit information on the genetic divergence of the sequences involved in the reassortment event \citep{Chan12112013,1406.4582}.

Given that viruses from the same subtype are more closely related to each other than they are to viruses from different subtypes, it is expected that lengths of intervals in $\textrm{PH}_1$ barcodes derived from events of intrasubtype reassortment will be shorter than those from intersubtype reassortment \citep{1406.4582}.  We will demonstrate this parametrically using our sufficiency result.  In this context, sufficiency means preserving the information that intra- and intersubtype samples should not cluster together.

Since we are interested in analyzing reassortment events in this example, we focus on avian influenza type A and study dimension 1 persistence.  As discussed above, in the same manner that phylogenetic trees are specific subgraphs and therefore are topological spaces, here the subgraph representing reassortment events is a directed acyclic graph and is also a topological space \citep{Chan12112013}.  By uniform sampling without replacement, as above, we generated 100 random subsets of 56 concatenated HA and NA sequences representing: (i) only H5N1 sequences, and (ii) sequences from all subtypes $\textrm{H5N}u$ and $\textrm{H}v\textrm{N1}$ where $u = 1, 2, \ldots, 12$ and $v = 1, 2, \ldots, 9$, respectively.  In total, 100 $\times$ 56 sequences were analyzed.  H5N1 is of particular interest because of its characteristics as a highly pathogenic subtype of avian influenza with over 60\% mortality and its high potential to cause a pandemic affecting the human population \citep{Kilpatrick19368}.  The resulting barcodes from subsets (i) provide information on intrasubtype reassortment events, while those from subsets (ii) provided information on intersubtype reassortment events. 


Sufficiency is an inherently parametric construct, and since we have sufficient statistics for the family of probability measures on the space of barcodes, the parametric structure of probability distributions between the two spaces is preserved \citep{10.2307/2238284, billingsley2012probability}.  We assume that the Euclidean representation of barcodes, from each of the intra- and intersubtype populations, jointly come from a multivariate normal distribution as in \eq{eqn:mle_trop_exp}:
$$
T(\mathscrbf{B}) \sim \mathcal{N}(\bm{\mu},\bm{\Sigma}),
$$
with $K$-dimensional mean vector $\bm{\mu}$, and $K \times K$ symmetric and positive semidefinite covariance matrix $\bm{\Sigma}$.  This implies that we may assume that each barcode to have a marginal univariate distribution:
\begin{align}
\label{eqn:marginal_normal}
T(\mathscr B_k)\sim \mathcal{N}(\mu_k,\sigma^2_k), \quad k = 1, 2, \ldots,K,
\end{align}
with scalar mean and variance $\vartheta = \{\mu_k,\sigma^2_k\}$, and $K = \{43,100\}$ for the intra- and intersubtype reassortment events, respectively.  The data on intra- and intersubtype reassortment events were obtained from the publicly available data source (see Appendix \ref{appendix:data} for details); data on intrasubtype reassortment are more limited than those on intersubtype reassortment.  The distribution plots for each class of reassortment events are shown in Figure \ref{Fig4}.

In this parametric setting, we may study pairwise comparisons between the marginal distributions of each $T(\mathscr B_k)$ in order to assess the following clustering behavior: samples that cluster together will tend to have marginal distributions that are more similar or, in other words, have shorter distances.  A function commonly used to measure differences between two probability distributions is an $f$-divergence.  Two particular instances of an $f$-divergence are the Kullback--Leibler divergence \citep{Kullback:1951aa} and Hellinger distance \citep{zbMATH02637393}, which we now briefly define.

\begin{definition}
Assume that $T(\mathscr B_i)$ and $T(\mathscr B_j)$ are probability measures that are absolutely continuous with respect to a third probability measure, $\lambda$.  The {\em Kullback--Leibler divergence} from $T(\mathscr B_i)$ to $T(\mathscr B_j)$ is defined to be the integral
\begin{align}
\label{eqn:kld}
\mbox{KLD}\big(T(\mathscr B_i)\,\|\,T(\mathscr B_j) \big) = \int \frac{\diff T(\mathscr B_i)}{\diff\lambda}\log\left(\frac{\diff T(\mathscr B_i)}{\diff\lambda} \Big/ \frac{\diff T(\mathscr B_j)}{\diff\lambda}\right)\diff\lambda.
\end{align}
Alternatively, the {\em Hellinger distance} between the measures may be computed by solving the following
\begin{align}
\label{eqn:hellinger}
\mbox{H}^2\big(T(\mathscr B_i), T(\mathscr B_j) \big) = \frac{1}{2}\int\left(\sqrt{\frac{\diff T(\mathscr B_i)}{\diff\lambda}}-\sqrt{\frac{\diff T(\mathscr B_j)}{\diff\lambda}}\right)^2\diff\lambda,
\end{align}
where $\diff T(\mathscr B_i)/\diff\lambda$ and $\diff T(\mathscr B_j)/\diff\lambda$ are the Radon--Nikodym derivatives of $T(\mathscr B_i)$ and $T(\mathscr B_j)$, respectively.
\end{definition}

Intuitively, $\mbox{KLD}(\cdot,\cdot)$ measures the relative entropy (or information gain) when comparing statistical models of inference; while, $\mbox{H}^2(\cdot,\cdot)$ is the probabilistic analog of the Euclidean distance between two distributions.  The latter is a natural measure for the Euclidean-space vector representation of barcodes.  The Kullback--Leibler divergence and Hellinger distance are nonnegative quantities and, in this context, take the value of zero if and only if two probability distributions are exactly equivalent.  Derivable from the Cauchy--Schwarz inequality, the Hellinger distance also has the desirable property of satisfying the unit interval bound for all probability distributions.  Namely, 
\begin{align*}
0\le\mbox{KLD}\big(T(\mathscr B_i)\,\|\,T(\mathscr B_j) \big)<\infty; \quad \quad 0\le \mbox{H}\big( T(\mathscr B_i), T(\mathscr B_j) \big)\le1.
\end{align*}
Note that it can be shown that when the probability measures $T(\mathscr B_i)$ and $T(\mathscr B_j)$ stem from the exponential family, the solutions of both integrals in \eq{eqn:kld} and \eq{eqn:hellinger} have a closed form \citep[e.g.,][]{van2000asymptotic,Duchi:2007aa}.  More specifically, for two normal random variables $T(\mathscr B_i)\sim \mathcal{N}(\mu_i,\sigma^2_i)$ and $T(\mathscr B_j)\sim \mathcal{N}(\mu_j,\sigma^2_j)$, we have the following:
\begin{align}
\label{eqn:kld_normal}
\mbox{KLD}\big(T(\mathscr B_i)\,\|\,T(\mathscr B_j) \big) &= \log\frac{\sigma_j}{\sigma_i}+\left[\frac{\sigma^2_i+(\mu_i-\mu_j)^2}{2\sigma^2_j}-\frac{1}{2}\right]\\
\label{eqn:hellinger_normal}
\mbox{H}^2\big( T(\mathscr B_i), T(\mathscr B_j) \big) &= 1-\sqrt{\frac{2\sigma_i\sigma_j}{\sigma^2_i+\sigma^2_j}}\exp\left\{-\frac{(\mu_i-\mu_j)^2}{4(\sigma^2_i+\sigma^2_j)}\right\}.
\end{align}
Based on the marginal normality assumption in \eq{eqn:marginal_normal} with parameter values $\mu_i, \mu_j$ and $\sigma_i, \sigma_j$ estimated empirically from the data, we show that the tropical coordinatization of persistence barcodes sufficiently preserves the fact that intra- and intersubtype samples do not cluster together.  This was done by calculating \eq{eqn:kld_normal} and \eq{eqn:hellinger_normal} for every pair $T(\mathscr B_i)$ and $T(\mathscr B_j)$ and encoding this information as similarity matrices $\mathbf{K}$ and $\mathbf{H}$, respectively.  Note that the key reasoning behind considering two different types of divergence measures is two illustrate the robustness of the results.  Figure \ref{Fig5} plots a transformed version of the matrix $\mathbf{K}^*=\mathbf{1}\mathbf{1}^{\T}-\exp\{-\mathbf{K}\}$ (for the purpose of exact comparisons and interpretability), where $\mathbf{1}$ is a vector of ones, and Hellinger distance matrix $\mathbf{H}$.  These results can be interpreted as follows: a value of $0$ represents exact likeness, while $1$ represents complete dissimilarity.  As expected from the literature, the intersubtypes are seen to be more homogeneous and demonstrate a higher level of similarity, while the intrasubtypes are comparatively much more random \citep[e.g.,][]{Yan:2011aa,Marshall:2013aa,pmid28034300}.  Moreover, there is very little overlap between the two groups.  These results are consistent for both $f$-divergence measures.

\section{Discussion}
\label{sec:discussion}

In this paper, we explored a functional vectorization method for persistence barcodes that is a topological embedding into Euclidean space based on tropical geometry.  We proved that it generates sufficient statistics for the family of all probability measures on the space of barcodes under mild regularity conditions.  The statistical and data analytic utility of this result lies in the fact that sufficiency allows parametric probabilistic assumptions to be imposed, which previously has been difficult due to the prohibitive geometry of the space of barcodes.  This allows for the application of classical parametric inference methodologies to persistence barcodes, which we demonstrated in a concrete example in dimension 1 persistence applied to avian influenza data.  Our methodological approach is based on the assumption that the random barcodes we obtain from real data may be represented by a parametric model.  Given the complex representation of persistent homology, we recognize that such an assumption may not always be reasonable---in which case, nonparametric methods as a more flexible alternative option \citep[e.g.,][]{1704.08248}.  However, note that the trade-off in relinquishing any parametric assumption is the extent to which inference may be carried out, which is a well-known problem in statistical modeling \citep[e.g.,][]{Crawford:2017aa}.

Our sufficiency also result provides the foundations for future research towards a better understanding of parametric probabilistic behavior on the space of barcodes. Since the tropical vectorization method was shown to be an injective function, we may take a class of parametric probability distributions (e.g., the exponential family) and calculate its image via the inverse of the tropical functions in order to explore what functional form the exponential family assumes on the space of barcodes. Such a study would be algorithmic in nature, since it would entail exploring collections of maps given by subsets of the orbits under the symmetric group action.

In terms of an increased utility in statistical analyses, the tropical functions are limited since the vector representation is unique up to barcodes only and not on the level of bars. This is restrictive in questions motivated by the application of our paper. For example, when predicting or modeling genetic outbreaks of infectious diseases, individual sequences (or at least the persistence generators) would need to be identifiable. Future research towards these efforts may begin with the development of a vectorization method that traces back to the individual bars of a barcode.


\section*{Software and Data Availability}

Software to compute and evaluate the tropical functions on barcodes is publicly available in C++ code, coauthored by Melissa McGuirl and Steve Oudot and located on the Tropix GitHub repository at \url{https://github.com/lorinanthony/Tropix}. The persistence barcodes were calculated using Ripser \citep{ripser}, which is written in C++ and is freely available at \url{https://github.com/Ripser/ripser}.  The bottleneck distances were computed using Hera \citep{doi:10.1137/1.9781611974317.9}, which is also written in C++ and freely available at \url{https://bitbucket.org/grey_narn/hera}.  The data used in this paper were obtained from publicly available sources and preprocessed, as detailed in Appendix \ref{appendix:data}. The final version used in the analyses in this paper are also publicly available on the Tropix GitHub repository.


\section*{Acknowledgements}

The authors are very grateful to Karen Gomez-Inguanzo and Melissa McGuirl for help with formulation of the code.  We also wish to thank Robert Adler, Ulrich Bauer, Omer Bobrowski, Iurie Boreico, Joseph Minhow Chan, Pawe\l \ D\l otko, Peter Hintz, Hossein Khiabanian, Albert Lee, Sayan Mukherjee, Ra\'ul Rabad\'an, and Nicole Solomon for helpful discussions.  We are especially indebted to Steve Oudot for his extensive input and support in code formulation and content throughout the course of this project.

A.M.~and J.A.P.-G.~are supported by the National Institutes of General Medical Sciences (NIGMS) of the National Institutes of Health (NIH) under award R01GM117591. L.C.~would like to acknowledge the support of start up funds from Brown University. Any opinions, findings, and conclusions or recommendations expressed in this material are those of the author(s) and do not necessarily reflect the views of any of the funders. The authors would also like to acknowledge GenBank, the HIV Sequence Database operated by Los Alamos National Security, LLC, and the National Center for Biotechnology Information (NCBI) Influenza Virus Database initiatives for making the data in this study publicly available.


\appendix
\renewcommand{\thesection}{Appendix}
\renewcommand{\thesubsection}{A\arabic{subsection}}
\renewcommand{\theAppDefinition}{A\arabic{AppDefinition}}
\renewcommand{\theAppClaim}{A\arabic{AppClaim}}
\section{}

\subsection{Mathematical supplement}
\label{appendix:math}

\begin{AppClaim}
The polynomials that are a ring of algebraic functions on the space of barcodes, identified by \cite{algfn}, are not Lipschitz with respect to the Wasserstein $p$- and bottleneck distances.
\end{AppClaim}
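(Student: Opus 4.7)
The plan is to exhibit a single polynomial from the ring identified by \cite{algfn} together with a one-parameter family of barcode pairs on which any putative Lipschitz constant is forced to diverge. It suffices to falsify Lipschitz continuity for one element of the ring, since Lipschitz continuity would have to hold for every generator.

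First, I would recall that the ring is generated by symmetric polynomial invariants in the birth-death coordinates of the bars that vanish on zero-length intervals (so that they descend to well-defined functions on barcode space). A convenient representative is
$$
p(\mathscr{B}) \;=\; \sum_{i} (y_i - x_i)^2 \;=\; \sum_i d_i^2,
$$
which is manifestly symmetric in the bars and vanishes whenever all $d_i = 0$.

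Next, I would consider, for each $n \in \mathbf{N}$ and a fixed $\varepsilon > 0$, the pair of barcodes
$$
\mathscr{B}_1^{(n)} = \big\{(0, n)\big\}, \qquad \mathscr{B}_2^{(n)} = \big\{(0,\, n + \varepsilon)\big\}.
$$
Using the bijection that matches $(0,n)$ with $(0, n + \varepsilon)$, a direct computation shows
$$
d_\infty^B\big(\mathscr{B}_1^{(n)}, \mathscr{B}_2^{(n)}\big) \;=\; d_p^W\big(\mathscr{B}_1^{(n)}, \mathscr{B}_2^{(n)}\big) \;=\; \varepsilon
$$
for every $p \geq 1$, independent of $n$. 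On the other hand,
$$
\big|p(\mathscr{B}_2^{(n)}) - p(\mathscr{B}_1^{(n)})\big| \;=\; (n + \varepsilon)^2 - n^2 \;=\; 2n\varepsilon + \varepsilon^2.
$$
The Lipschitz ratio is therefore $2n + \varepsilon$, which grows without bound as $n \to \infty$, contradicting any global Lipschitz inequality of the form $|p(\mathscr{B}_1) - p(\mathscr{B}_2)| \leq L \cdot d(\mathscr{B}_1, \mathscr{B}_2)$. The same pair witnesses the failure for the bottleneck distance and every Wasserstein $p$-distance simultaneously.

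The only delicate point is verifying that the chosen $p$ genuinely lies in the ring of \cite{algfn}, rather than being Lipschitz by accident; any polynomial of total degree at least two in the birth-death coordinates exhibits the same obstruction, because linear growth of the metric under translation along the diagonal cannot dominate the superlinear growth of a degree-$\geq 2$ polynomial. This is the conceptual reason the construction succeeds and, in effect, explains why one must tropicalize to obtain Lipschitz coordinates in the first place.
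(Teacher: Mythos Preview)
Your argument is correct: the polynomial $\sum_i d_i^2$ is indeed one of the power-sum generators $p_{0,2}$ in the Adcock--Carlsson--Carlsson ring, and your one-parameter family $\{(0,n)\}$ versus $\{(0,n+\varepsilon)\}$ cleanly forces the Lipschitz ratio to diverge. The overall strategy---produce a single explicit generator and a family of barcode pairs on which the ratio blows up---is the same as the paper's.

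The paper makes different concrete choices, and the contrast is worth noting. It uses $p_{2,1}(\mathscr{B}) = \sum_i (x_i+y_i)^2(y_i-x_i)$ and the family $\mathscr{B}_0 = \Delta$ against $\mathscr{B}_x = \{[x,x+1]\}$, so that the bottleneck and Wasserstein distances are \emph{constant} (equal to $1/2$) while $p_{2,1}(\mathscr{B}_x) = (2x+1)^2 \to \infty$. The distinction is that the paper varies the \emph{birth time} while keeping the bar length fixed at $1$, whereas you vary the \emph{length}. Consequently the paper's counterexample lives inside a bounded-persistence region of barcode space, which is a slightly sharper statement: it shows that even restricting to barcodes with uniformly short bars does not rescue Lipschitz continuity for $p_{2,1}$. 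Your family, by contrast, requires bars of unbounded length, so it would not directly rule out Lipschitzness on a fixed $\mathcal{B}_{\leq n}^m$-type region---though this is not what the claim asks for.

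One small caution: your closing heuristic (``linear growth of the metric under translation along the diagonal'') does not match the family you actually built, which stretches bars rather than translating them; and the degree-$\geq 2$ remark, while morally right, does not apply uniformly to every such polynomial under your specific family (e.g.\ $\sum_i x_i d_i$ is identically zero on your barcodes). The concrete counterexample stands on its own; the heuristic could be tightened or dropped.
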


\begin{proof}
Consider the following counterexample.  Take one barcode to be the diagonal $\Delta$ (i.e., intervals of length $0$) denoted by $\mathscr B_0$ and consider the family of barcodes with a single interval $\mathscr B_x = \{ [x, x+1] \}$.  The Wasserstein $p$- and bottleneck distances between these $\mathscr B_0$ and $\mathscr B_x$ is the same for all $x$, namely, $1/2$.  Consider a polynomial given by \cite{algfn}, e.g.,
$$
p_{2, 1}(x_1, y_1, x_2, y_2, \ldots) = \sum_i (x_i+y_i)^2 (y_i-x_i)^1,
$$
then
\[
\begin{array}{ccc}
p_{2, 1} (\mathscr B_0) =0 &\, \textrm{and} \,& p_{2, 1} (\mathscr B_x) = (2x+1)^2.
\end{array}
\]
Now, the difference $p_{2, 1} (\mathscr B_x) - p_{2, 1}(\mathscr B_0)$ tends to infinity as $x$ tends to infinity.  The bottleneck (and Wasserstein $p$-) distance, meanwhile, is constant at value $1/2$ the entire time.  Therefore, by definition, this function is not Lipschitz.  This particular case is problematic and shows that other functions proposed in \cite{algfn} are also not Lipschitz with respect to the Wasserstein $p$- and bottleneck distances. 
\end{proof}

\subsection{Molecular phylogenetic analysis}
\label{appendix:trees}

A phylogenetic analysis consists of studying the evolutionary relationships of a set of individuals (i.e., viral sequences), with the aim of extracting and analyzing their diversification history.  This type of analysis is usually performed by using an alignment of sequences as input.  Given this input, a {\em phylogenetic tree} is then produced---that is, a graphical representation of the evolutionary relationships among the individuals.  Common statistical methods of phylogenetic inference are maximum likelihood and Bayesian phylogenetics, which aim to maximize a statistical parameter (either the likelihood of the alignment or its posterior probability) by modeling the molecular evolutionary processes considered in the phylogenetic reconstruction using stochastic processes \citep[e.g.,][]{doi:10.1093/sysbio/syq010, doi:10.1093/sysbio/sys029}.  Other methods are distance-based, which may be used on the trees themselves as a means of reconstruction \citep[e.g.,][]{10.2307/1720651, doi:10.1093/oxfordjournals.molbev.a040454}, or on spaces of trees, which are commonly used to study and compare various trees \cite[e.g.,][]{BILLERA2001733}.  See \cite{pachter2005algebraic} and \cite{lin2018tropical} for further detail and a discussion on challenges of computational phylogenetic analysis.

\subsection{Data sourcing and preprocessing}
\label{appendix:data}

The respective virus data sets in this paper were obtained from public sources.  We give the details on their sources and preliminary data processing procedures below.

\paragraph{HIV.}

HIV polymerase sequences derived from patients included in the MSM HIV transmission cluster \citep{juan2} were retrieved from supplementary material made public on GenBank. Sequences derived from unrelated patients were obtained from the Los Alamos HIV database in October 2016.  Only sequences from the same subtype (HIV subtype B) spanning the polymerase region were considered. In order to ensure that these sequences were not epidemiologically related, redundant sequences were removed after conducting an initial clustering analysis with a specified genetic distance threshold of 5\%, using CD-HIT \citep{pmid20053844}.  All sequences were aligned using MAFFTv7 \citep{pmid23329690}.

\paragraph{Avian influenza.}

HA and NA genes of avian influenza A were downloaded from the Influenza Virus Database of the National Center for Biotechnology Information (NCBI).  The resulting gene data sets were aligned with MAFFTv7 \citep{pmid23329690}.  Concatenated sequences of both genes (derived from the same sample) were generated with the package {\tt ape} written in R \citep{doi:10.1093/bioinformatics/btg412}.  The multiple sequence alignments were trimmed with TrimAl \citep{pmid19505945} in order to remove regions of sparse homology (i.e., biologically shared ancestry).\\

\noindent In both viral examples, pairwise distances were obtained using PAUP* \citep{Swofford01paup*:phylogenetic} and were calculated by using the GTR + GAMMA (4 CAT) model, which is commonly used for studying HIV and influenza data \citep{Tian06012015, pmid27783600}.  Briefly, the generalized time reversible (GTR) model is an evolutionary model that considers variable base frequencies, where each pair of nucleotide substitutions occur at different rates \citep{doi:10.1146/annurev.ge.29.120195.002153}.  Combined with a gamma distribution, it also accounts for rate variation among sites \citep{pmid7713447}.  The use of a substitution model when calculating genetic distances, as carried out according to these procedures, leads to estimates that are assumed to be more biologically accurate.


\newpage
\section*{Figures}

\begin{figure}[H]
\centering
\includegraphics[scale=0.8]{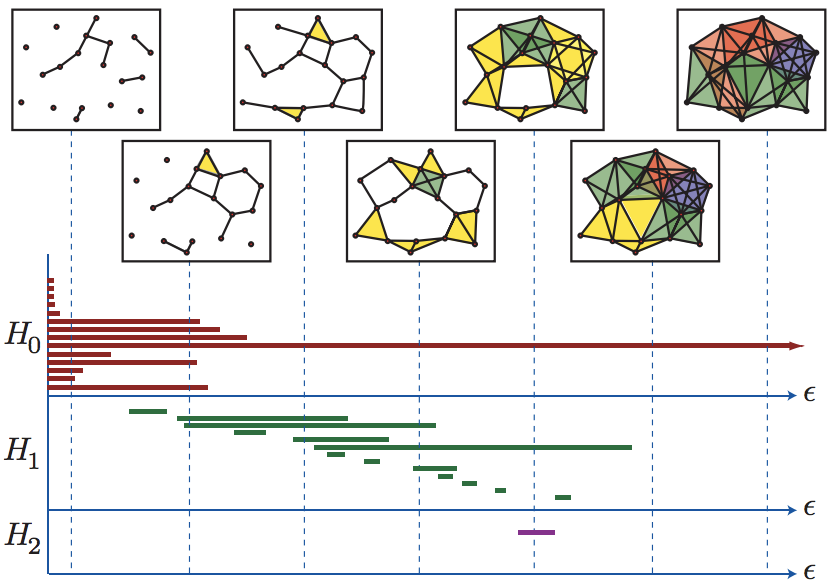}
\caption{Example illustrating persistent homology.  This figure originally appears in \cite{Ghrist2008}.  Here, 17 points are sampled from an annulus with topology given by $\beta_0 = 1$, $\beta_1 = 1$, and $\beta_2 = 0$.  Simplicial complexes are computed for continuous values of the filtration parameter, $\epsilon \in [0,\infty)$.  The filtration is illustrated in the upper panels as the evolution of a simplicial complex in terms of vertices, edges, and faces that are formed as the value of $\epsilon$ increases.  $H_0$ homology captures connected components; $H_1$ captures cycles whose boundaries are formed by edges between vertices; and $H_2$ captures cycles with boundaries formed by faces.  The dashed lines extending from the panels at instances of the filtration connect to the bars representing the topological features that exist at the corresponding values of $\epsilon$.  As $\epsilon$ progresses, connected components merge, cycles form and then fill up.  The barcode summarizes this progression of $\epsilon$ by tracking the ``lifetimes" of all topological features according to their homology groups.  Notice that there is a single $H_0$ bar that persists as $\epsilon \rightarrow \infty$, representing the single connected component of the annulus.  Also, there is a single $H_2$ bar representing the cycle bounded by faces in the corresponding panel, but the length of the bar is comparatively short and likely to be a spurious topological artefact.}
\label{Fig_A4}
\end{figure}

\begin{figure}[H]
\centering
\includegraphics[width = 0.8\textwidth]{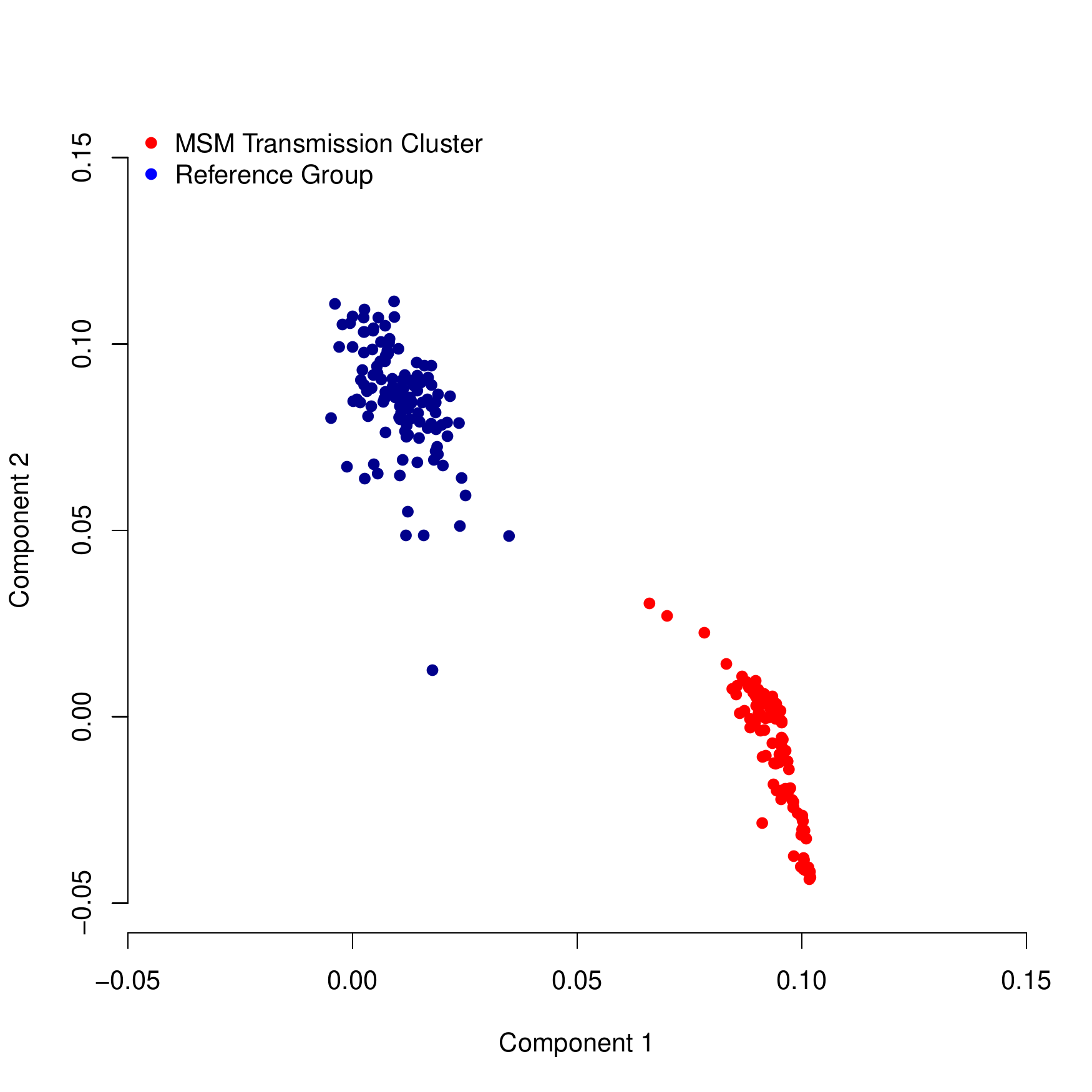}
\caption{{\bf Principal component analysis plot of genetic clustering for HIV sequences.}  Principal components calculated from pairwise genetic distances for HIV transmission cluster versus unrelated reference cluster.}
\label{Fig1}
\end{figure}

\begin{figure}[H]
\centering
\subfigure[]{
\includegraphics[width = 0.48\textwidth]{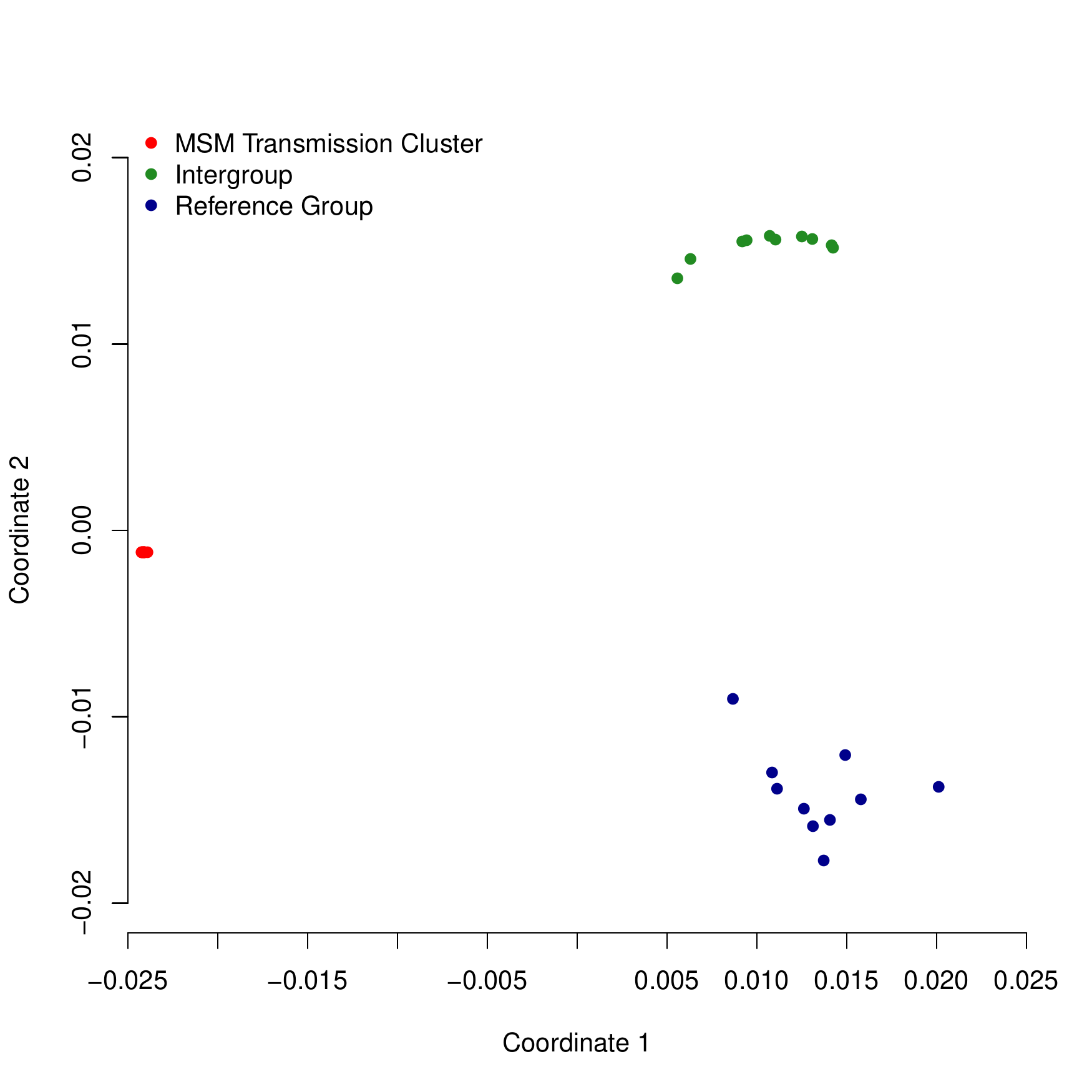}
\label{Fig2}
}
\subfigure[]{
\includegraphics[width = 0.48\textwidth]{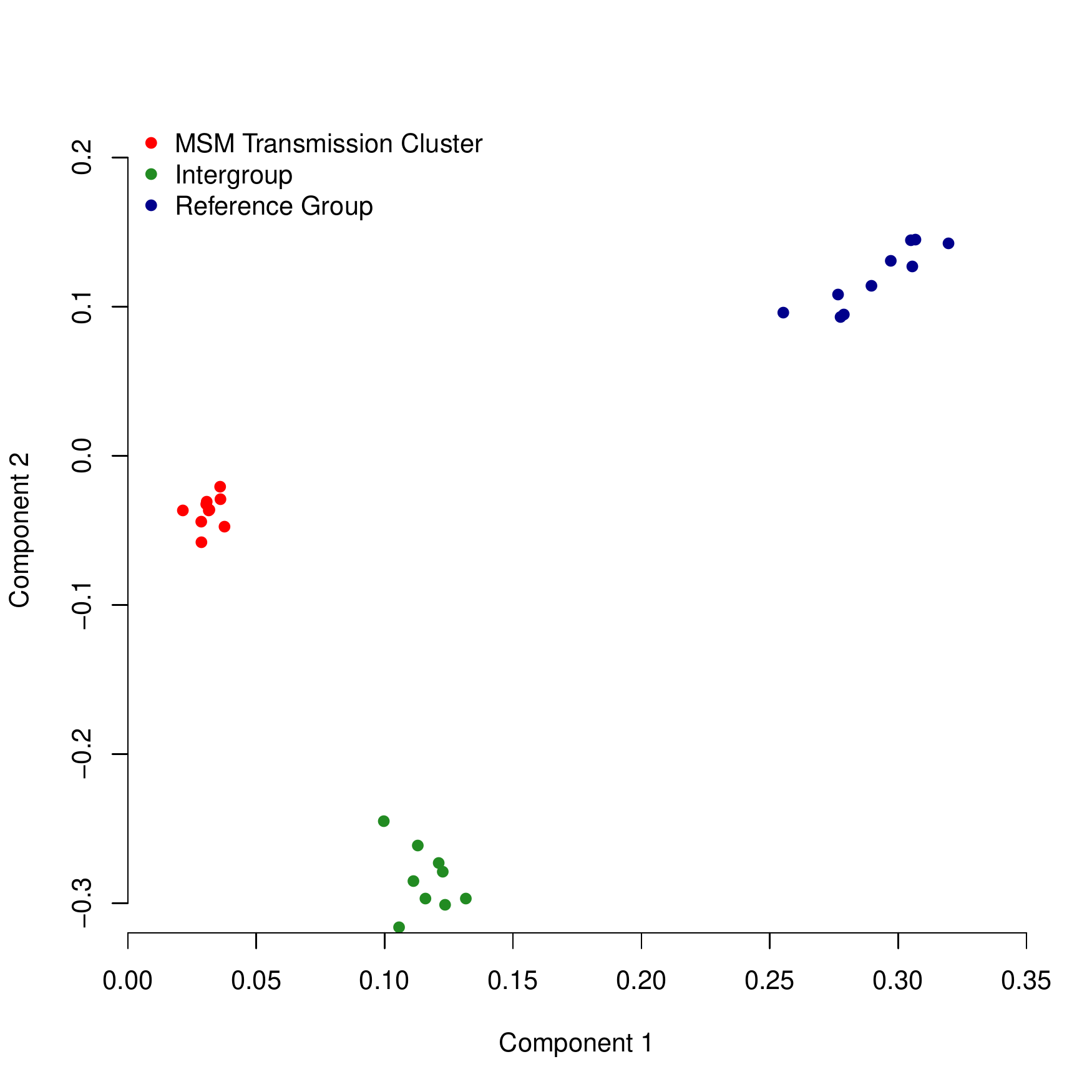}
\label{Fig3}
}
\caption{{\bf Projection plots for pairwise bottleneck distances and Euclidean distances for HIV sequences.}  (a) Metric multidimensional scaling (MDS) calculated from pairwise bottleneck distances calculated from dimension 0 persistence barcodes. (b)  Principal component analysis (PCA) calculated from pairwise Euclidean distances of tropical coordinatized barcodes.}
\label{newFig2}
\end{figure}

\begin{figure}[H]
\centering
\includegraphics[width = \textwidth]{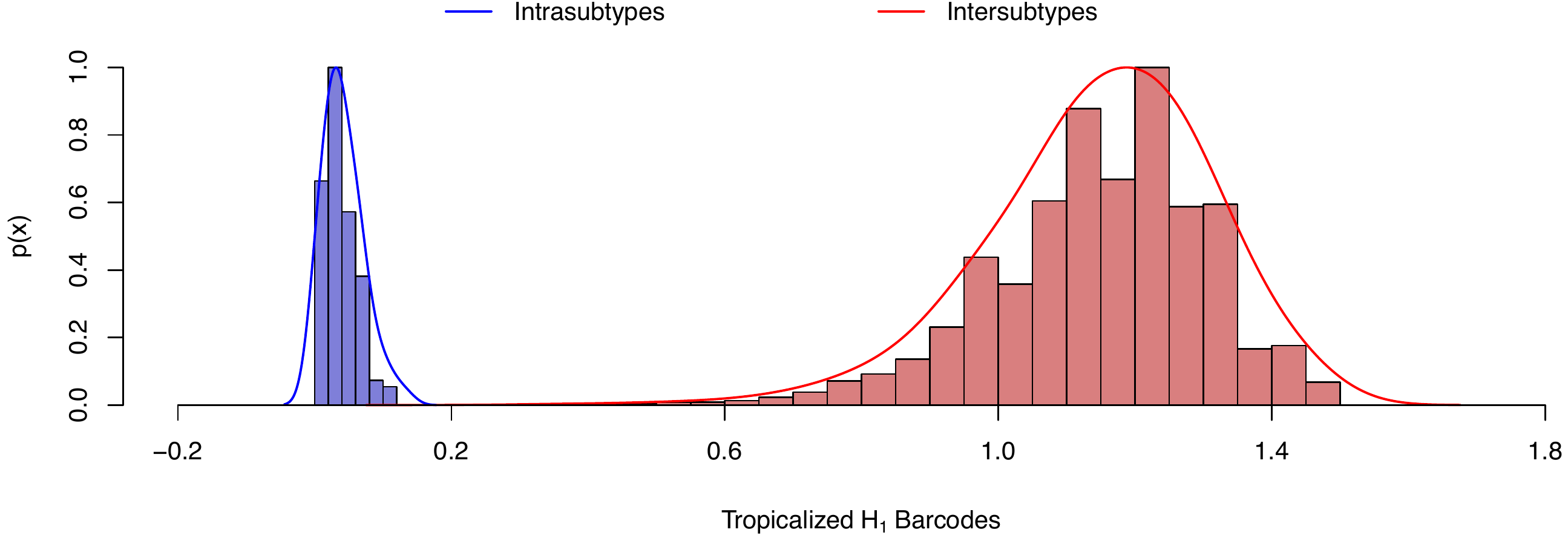}
\caption{{\bf Marginal distribution plot of intra- and intersubtype reassortment events for avian influenza.}  The marginal distributions for square-root transformations of the Euclidean barcode representations (via tropical coordinatization) were calculated for both intra- and intersubtype reassortment for avian influenza and then fitted with a smooth density function.}
\label{Fig4}
\end{figure}

\begin{figure}[H]
\centering
\subfigure[Scaled Kullback--Leibler Divergence]{
\includegraphics[width =0.7\textwidth]{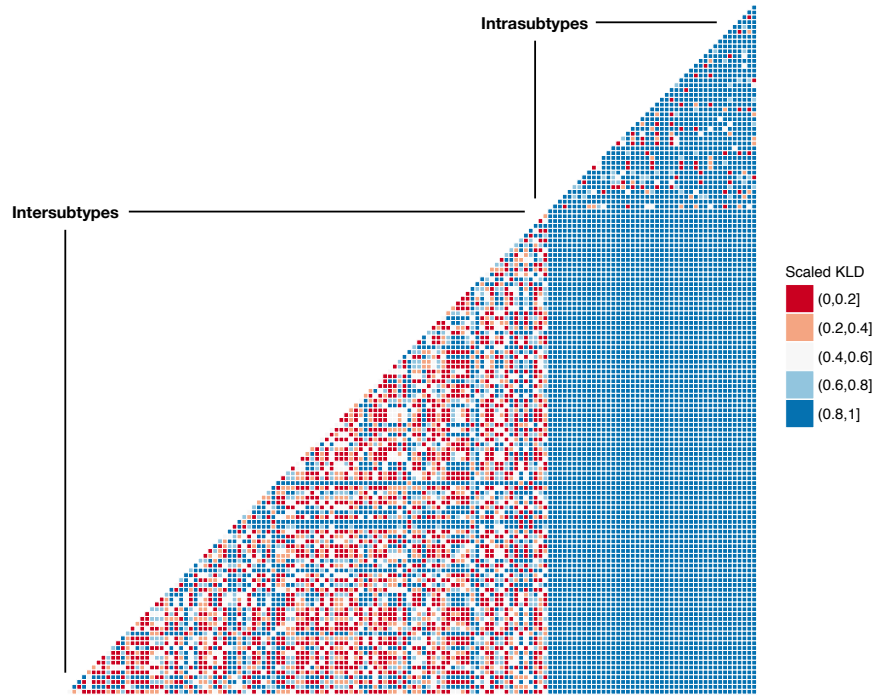}
}
\subfigure[Hellinger Distance]{
\includegraphics[width =0.7\textwidth]{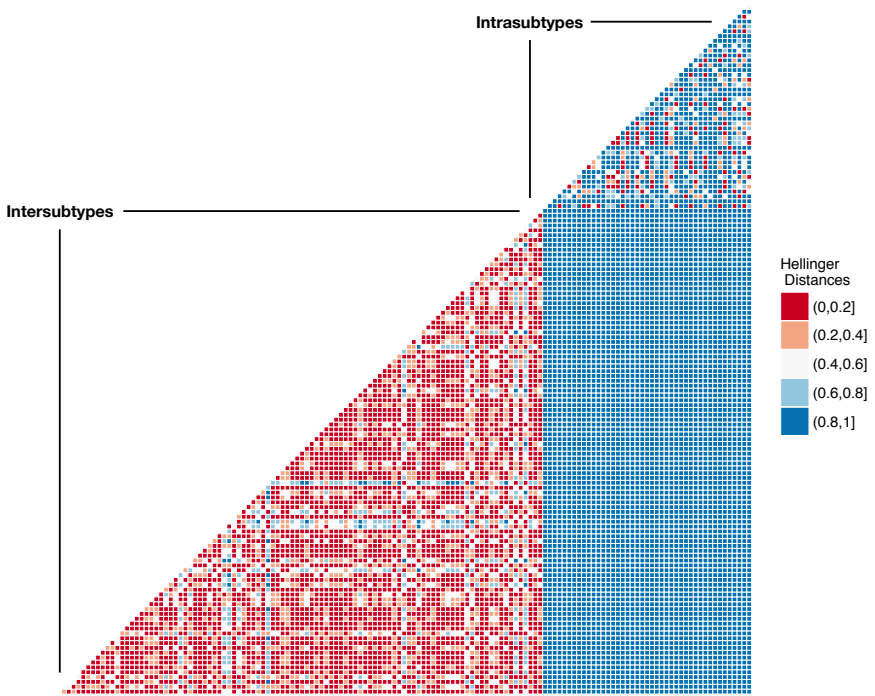}
}
\caption{{\bf Visual pairwise comparisons of transformed Kullback--Leibler divergence and Hellinger distances between Euclidean barcode representations.}  The entries are color-coded according to values of the corresponding similarity matrices (a) $\mathbf{K}^* = \mathbf{1}\mathbf{1}^{\T}-\mathbf{K}$, where $\mathbf{1}$ is a vector of ones, and (b) $\mathbf{H}$. A value of $0$ represents exact likeness, while $1$ represents complete dissimilarity. Red values represent those that are more similar, while blue values are more dissimilar. This plot shows that intersubtypes are more homogeneous (a higher level of similarity, i.e., more red), while the intrasubtypes are comparatively much more random. There is very little overlap between the two groups.}
\label{Fig5}
\end{figure}




\clearpage
\newpage
\bibliographystyle{chicago}  
\bibliography{Tropix_Ref} 

\begin{thebibliography}{}

\bibitem[\protect\citeauthoryear{Abraham, Bartal, and Neiman}{Abraham
  et~al.}{2007}]{ABN07tree}
Abraham, I., Y.~Bartal, and O.~Neiman (2007).
\newblock Embedding metrics into ultrametrics and graphs into spanning trees
  with constant average distortion.
\newblock In {\em SODA '07: Proceedings of the eighteenth annual ACM-SIAM
  symposium on Discrete algorithms}, Philadelphia, PA, USA, pp.\  502--511.
  Society for Industrial and Applied Mathematics.

\bibitem[\protect\citeauthoryear{Adams and Carlsson}{Adams and
  Carlsson}{2015}]{adams}
Adams, H. and G.~Carlsson (2015).
\newblock Evasion paths in mobile sensor networks.
\newblock {\em The International Journal of Robotics Research\/}~{\em 34\/}(1),
  90--104.

\bibitem[\protect\citeauthoryear{Adams, Emerson, Kirby, Neville, Peterson,
  Shipman, Chepushtanova, Hanson, Motta, and Ziegelmeier}{Adams
  et~al.}{2017}]{Adams:2017:PIS:3122009.3122017}
Adams, H., T.~Emerson, M.~Kirby, R.~Neville, C.~Peterson, P.~Shipman,
  S.~Chepushtanova, E.~Hanson, F.~Motta, and L.~Ziegelmeier (2017).
\newblock Persistence images: A stable vector representation of persistent
  homology.
\newblock {\em Journal of Machine Learning Research\/}~{\em 18\/}(1), 218--252.

\bibitem[\protect\citeauthoryear{Adcock, Carlsson, and Carlsson}{Adcock
  et~al.}{2016}]{algfn}
Adcock, A., E.~Carlsson, and G.~Carlsson (2016).
\newblock The ring of algebraic functions on persistence bar codes.
\newblock {\em Homology, Homotopy and Applications\/}~{\em 18}, 381--402.

\bibitem[\protect\citeauthoryear{Adcock, Rubin, and Carlsson}{Adcock
  et~al.}{2014}]{ADCOCK201436}
Adcock, A., D.~Rubin, and G.~Carlsson (2014).
\newblock Classification of hepatic lesions using the matching metric.
\newblock {\em Computer Vision and Image Understanding\/}~{\em 121}, 36--42.

\bibitem[\protect\citeauthoryear{Adler and Taylor}{Adler and
  Taylor}{2011}]{adler2011topological}
Adler, R. and J.~Taylor (2011).
\newblock {\em Topological Complexity of Smooth Random Functions: {\'E}cole
  d'{\'E}t{\'e} de Probabilit{\'e}s de Saint-Flour XXXIX-2009}.
\newblock Lecture Notes in Mathematics. Springer Berlin Heidelberg.

\bibitem[\protect\citeauthoryear{Adler, Agami, and Pranav}{Adler
  et~al.}{2017}]{1704.08248}
Adler, R.~J., S.~Agami, and P.~Pranav (2017).
\newblock Modeling and replicating statistical topology and evidence for
  {C}{M}{B} nonhomogeneity.
\newblock {\em Proceedings of the National Academy of Sciences\/}.

\bibitem[\protect\citeauthoryear{Anders and Huber}{Anders and
  Huber}{2010}]{Anders:2010aa}
Anders, S. and W.~Huber (2010).
\newblock Differential expression analysis for sequence count data.
\newblock {\em Genome Biology\/}~{\em 11\/}(10), R106.

\bibitem[\protect\citeauthoryear{Bartlett}{Bartlett}{1937}]{Bartlett:1937aa}
Bartlett, M.~S. (1937).
\newblock Properties of sufficiency and statistical tests.
\newblock {\em Proceedings of the Royal Society of London. Series A,
  Mathematical and Physical Sciences\/}~{\em 160\/}(901), 268--282.

\bibitem[\protect\citeauthoryear{Bauer}{Bauer}{2015}]{ripser}
Bauer, U. (2015).
\newblock \copyright~{R}ipser \url{https://github.com/Ripser/ripser}.

\bibitem[\protect\citeauthoryear{Bendich, Chin, Clark, Desena, Harer, Munch,
  Newman, Porter, Rouse, Strawn, and Watkins}{Bendich et~al.}{2016}]{7855573}
Bendich, P., S.~P. Chin, J.~Clark, J.~Desena, J.~Harer, E.~Munch, A.~Newman,
  D.~Porter, D.~Rouse, N.~Strawn, and A.~Watkins (2016).
\newblock Topological and statistical behavior classifiers for tracking
  applications.
\newblock {\em IEEE Transactions on Aerospace and Electronic Systems\/}~{\em
  52\/}(6), 2644--2661.

\bibitem[\protect\citeauthoryear{Billera, Holmes, and Vogtmann}{Billera
  et~al.}{2001}]{BILLERA2001733}
Billera, L.~J., S.~P. Holmes, and K.~Vogtmann (2001).
\newblock Geometry of the {S}pace of {P}hylogenetic {T}rees.
\newblock {\em Advances in Applied Mathematics\/}~{\em 27\/}(4), 733--767.

\bibitem[\protect\citeauthoryear{Billingsley}{Billingsley}{1979}]{billingsley2012probability}
Billingsley, P. (1979).
\newblock {\em Probability and Measure}.
\newblock John Wiley \& Sons.

\bibitem[\protect\citeauthoryear{Blumberg, Gal, Mandell, and Pancia}{Blumberg
  et~al.}{2014}]{Blumberg2014}
Blumberg, A.~J., I.~Gal, M.~A. Mandell, and M.~Pancia (2014).
\newblock Robust statistics, hypothesis testing, and confidence intervals for
  persistent homology on metric measure spaces.
\newblock {\em Foundations of Computational Mathematics\/}~{\em 14\/}(4),
  745--789.

\bibitem[\protect\citeauthoryear{Bubenik}{Bubenik}{2015}]{Bubenik:2015:STD:2789272.2789275}
Bubenik, P. (2015).
\newblock Statistical topological data analysis using persistence landscapes.
\newblock {\em Journal of Machine Learning Research\/}~{\em 16\/}(1), 77--102.

\bibitem[\protect\citeauthoryear{Capella-Guti\'{e}rrez, Silla-Mart\'{i}nez, and
  Gabald\'{o}n}{Capella-Guti\'{e}rrez et~al.}{2009}]{pmid19505945}
Capella-Guti\'{e}rrez, S., J.~M. Silla-Mart\'{i}nez, and T.~Gabald\'{o}n
  (2009).
\newblock trim{A}{l}: a tool for automated alignment trimming in large-scale
  phylogenetic analyses.
\newblock {\em Bioinformatics\/}~{\em 25\/}(15), 1972--1973.

\bibitem[\protect\citeauthoryear{Carlsson}{Carlsson}{2014}]{carlsson2014}
Carlsson, G. (2014).
\newblock Topological pattern recognition for point cloud data.
\newblock {\em Acta Numerica\/}~{\em 23}, 289--368.

\bibitem[\protect\citeauthoryear{Carlsson and {Kali\v{s}nik
  Verov\v{s}ek}}{Carlsson and {Kali\v{s}nik Verov\v{s}ek}}{2016}]{KalisnikSym}
Carlsson, G. and S.~{Kali\v{s}nik Verov\v{s}ek} (2016).
\newblock Symmetric and $r$-symmetric tropical polynomials and rational
  functions.
\newblock {\em Journal of Pure and Applied Algebra\/}~{\em 220\/}(11),
  3610--3627.

\bibitem[\protect\citeauthoryear{Carri{\`e}re, Oudot, and
  Ovsjanikov}{Carri{\`e}re et~al.}{2015}]{bartovecInria}
Carri{\`e}re, M., S.~Y. Oudot, and M.~Ovsjanikov (2015).
\newblock Stable {T}opological {S}ignatures for {P}oints on 3{D} {S}hapes.
\newblock {\em Eurographics Symposium on Geometry Processing 2015\/}~{\em 34},
  77--102.

\bibitem[\protect\citeauthoryear{Chan, Carlsson, and Rabad\'an}{Chan
  et~al.}{2013}]{Chan12112013}
Chan, J.~M., G.~Carlsson, and R.~Rabad\'an (2013).
\newblock Topology of viral evolution.
\newblock {\em Proceedings of the National Academy of Sciences\/}~{\em
  110\/}(46), 18566--18571.

\bibitem[\protect\citeauthoryear{Chazal, Fasy, Lecci, Rinaldo, and
  Wasserman}{Chazal et~al.}{2014}]{Chazal:2014:SCP:2582112.2582128}
Chazal, F., B.~T. Fasy, F.~Lecci, A.~Rinaldo, and L.~Wasserman (2014).
\newblock Stochastic convergence of persistence landscapes and silhouettes.
\newblock In {\em Proceedings of the Thirtieth Annual Symposium on
  Computational Geometry}, SOCG'14, New York, NY, USA, pp.\  474:474--474:483.
  ACM.

\bibitem[\protect\citeauthoryear{Chen, Wang, Rinaldo, and Wasserman}{Chen
  et~al.}{2015}]{chen2015statistical}
Chen, Y.-C., D.~Wang, A.~Rinaldo, and L.~Wasserman (2015).
\newblock Statistical analysis of persistence intensity functions.
\newblock {\em arXiv\/}.

\bibitem[\protect\citeauthoryear{Chung, Bubenik, and Kim}{Chung
  et~al.}{2009}]{Chung2009}
Chung, M.~K., P.~Bubenik, and P.~T. Kim (2009).
\newblock Persistence diagrams of cortical surface data.
\newblock In J.~L. Prince, D.~L. Pham, and K.~J. Myers (Eds.), {\em Information
  Processing in Medical Imaging}, Berlin, Heidelberg, pp.\  386--397. Springer
  Berlin Heidelberg.

\bibitem[\protect\citeauthoryear{Cohen-Steiner, Edelsbrunner, and
  Harer}{Cohen-Steiner et~al.}{2007}]{Cohen-Steiner2007}
Cohen-Steiner, D., H.~Edelsbrunner, and J.~Harer (2007).
\newblock Stability of persistence diagrams.
\newblock {\em Discrete {\&} Computational Geometry\/}~{\em 37\/}(1), 103--120.

\bibitem[\protect\citeauthoryear{Crawford, Monod, Chen, Mukherjee, and
  Rabad{\'a}n}{Crawford et~al.}{2016}]{1611.06818}
Crawford, L., A.~Monod, A.~X. Chen, S.~Mukherjee, and R.~Rabad{\'a}n (2016).
\newblock Functional {D}ata {A}nalysis using a {T}opological {S}ummary
  {S}tatistic: the {S}mooth {E}uler {C}haracteristic {T}ransform.
\newblock {\em arXiv\/}.

\bibitem[\protect\citeauthoryear{Crawford, Wood, Zhou, and Mukherjee}{Crawford
  et~al.}{2018}]{Crawford:2017aa}
Crawford, L., K.~C. Wood, X.~Zhou, and S.~Mukherjee (2018).
\newblock Bayesian approximate kernel regression with variable selection.
\newblock {\em Journal of the American Statistical Association\/}, 1--12.

\bibitem[\protect\citeauthoryear{Curto, Itskov, Veliz-Cuba, and Youngs}{Curto
  et~al.}{2013}]{Curto2013}
Curto, C., V.~Itskov, A.~Veliz-Cuba, and N.~Youngs (2013).
\newblock The neural ring: An algebraic tool for analyzing the intrinsic
  structure of neural codes.
\newblock {\em Bulletin of Mathematical Biology\/}~{\em 75\/}(9), 1571--1611.

\bibitem[\protect\citeauthoryear{Darmois}{Darmois}{1935}]{darmois}
Darmois, G. (1935).
\newblock Sur les lois de probabilit\'es \`a estimation exhaustive.
\newblock {\em C.R.\ Acad.\ Sci. Paris (in French)\/}~{\em 200}, 1265--1266.

\bibitem[\protect\citeauthoryear{Diaconis}{Diaconis}{1988}]{browder1992mathematics}
Diaconis, P. (1988).
\newblock Sufficiency as statistical symmetry.
\newblock In {\em Proceedings of the AMS Centennial Symposium}, pp.\  15--26.

\bibitem[\protect\citeauthoryear{Donnelly and Tavar\'e}{Donnelly and
  Tavar\'e}{1995}]{doi:10.1146/annurev.ge.29.120195.002153}
Donnelly, P. and S.~Tavar\'e (1995).
\newblock Coalescents and genealogical structure under neutrality.
\newblock {\em Annual Review of Genetics\/}~{\em 29\/}(1), 401--421.
\newblock PMID: 8825481.

\bibitem[\protect\citeauthoryear{Duchi}{Duchi}{2007}]{Duchi:2007aa}
Duchi, J. (2007).
\newblock Derivations for linear algebra and optimization.
\newblock Berkeley, California.

\bibitem[\protect\citeauthoryear{Edelsbrunner, Letscher, and
  Zomorodian}{Edelsbrunner et~al.}{2002}]{elz-tps-02}
Edelsbrunner, H., D.~Letscher, and A.~J. Zomorodian (2002).
\newblock Topological persistence and simplification.
\newblock {\em Discrete and Computational Geometry\/}~{\em 28}, 511--533.

\bibitem[\protect\citeauthoryear{Emmett, Rosenbloom, C\'{a}mara, and
  Rabad\'{a}n}{Emmett et~al.}{2014}]{1406.4582}
Emmett, K., D.~Rosenbloom, P.~C\'{a}mara, and R.~Rabad\'{a}n (2014).
\newblock Parametric inference using persistence diagrams: a case study in
  population genetics.
\newblock {\em arXiv\/}.

\bibitem[\protect\citeauthoryear{Fasy, Lecci, Rinaldo, Wasserman, Balakrishnan,
  and Singh}{Fasy et~al.}{2014}]{fasy2014}
Fasy, B.~T., F.~Lecci, A.~Rinaldo, L.~Wasserman, S.~Balakrishnan, and A.~Singh
  (2014, 12).
\newblock Confidence sets for persistence diagrams.
\newblock {\em The Annals of Statistics\/}~{\em 42\/}(6), 2301--2339.

\bibitem[\protect\citeauthoryear{Ferri and Landi}{Ferri and
  Landi}{1999}]{landi}
Ferri, M. and C.~Landi (1999).
\newblock Representing size functions by complex polynomials.
\newblock {\em Proc. Math. Met. in Pattern Recognition\/}~{\em 9}, 16--19.

\bibitem[\protect\citeauthoryear{Ferri and Stanganelli}{Ferri and
  Stanganelli}{2010}]{Ferri}
Ferri, M. and I.~Stanganelli (2010).
\newblock Size {F}unctions for the {M}orphological {A}nalysis of {M}elanocytic
  {L}esions.
\newblock {\em Journal of Biomedical Imaging\/}~{\em 2010}, 5:1--5:5.

\bibitem[\protect\citeauthoryear{Fisher}{Fisher}{1922}]{Fisher309}
Fisher, R.~A. (1922).
\newblock On the mathematical foundations of theoretical statistics.
\newblock {\em Philosophical Transactions of the Royal Society of London A:
  Mathematical, Physical and Engineering Sciences\/}~{\em 222\/}(594-604),
  309--368.

\bibitem[\protect\citeauthoryear{Fitch and Margoliash}{Fitch and
  Margoliash}{1967}]{10.2307/1720651}
Fitch, W.~M. and E.~Margoliash (1967).
\newblock Construction of {P}hylogenetic {T}rees.
\newblock {\em Science\/}~{\em 155\/}(3760), 279--284.

\bibitem[\protect\citeauthoryear{Frosini and Landi}{Frosini and
  Landi}{1999}]{Frosini_sizetheory}
Frosini, P. and C.~Landi (1999).
\newblock Size theory as a topological tool for computer vision.
\newblock {\em Pattern Recognition and Image Analysis\/}~{\em 9\/}(4),
  596--603.

\bibitem[\protect\citeauthoryear{Ghrist}{Ghrist}{2008}]{Ghrist2008}
Ghrist, R. (2008).
\newblock {Barcodes: The persistent topology of data}.
\newblock {\em Bull. Amer. Math. Soc. (N.S.)\/}~{\em 45\/}(1), 61--75.

\bibitem[\protect\citeauthoryear{Ghrist and de~Silva}{Ghrist and
  de~Silva}{2006}]{VinEvader}
Ghrist, R. and V.~de~Silva (2006).
\newblock Coordinate-free coverage in sensor networks with controlled
  boundaries via homology.
\newblock {\em International Journal of Robotics Research\/}~{\em 25},
  1205--1222.

\bibitem[\protect\citeauthoryear{Giusti, Pastalkova, Curto, and Itskov}{Giusti
  et~al.}{2015}]{Giusti_Pastalkova_Curto_Itskov_2015}
Giusti, C., E.~Pastalkova, C.~Curto, and V.~Itskov (2015).
\newblock Clique topology reveals intrinsic geometric structure in neural
  correlations.
\newblock {\em Proceedings of the National Academy of Sciences\/}~{\em
  112\/}(44), 13455--13460.

\bibitem[\protect\citeauthoryear{Guindon, Dufayard, Lefort, Anisimova, Hordijk,
  and Gascuel}{Guindon et~al.}{2010}]{doi:10.1093/sysbio/syq010}
Guindon, S., J.-F. Dufayard, V.~Lefort, M.~Anisimova, W.~Hordijk, and
  O.~Gascuel (2010).
\newblock New algorithms and methods to estimate maximum-likelihood
  phylogenies: Assessing the performance of phyml 3.0.
\newblock {\em Systematic Biology\/}~{\em 59\/}(3), 307--321.

\bibitem[\protect\citeauthoryear{Halmos and Savage}{Halmos and
  Savage}{1949}]{halmos1949}
Halmos, P.~R. and L.~J. Savage (1949).
\newblock Application of the {R}adon--{N}ikodym {T}heorem to the {T}heory of
  {S}ufficient {S}tatistics.
\newblock {\em The Annals of Mathematical Statistics\/}~{\em 20\/}(2),
  225--241.

\bibitem[\protect\citeauthoryear{Hassan, Pybus, Sanders, Albert, and
  Esbjornsson}{Hassan et~al.}{2017}]{pmid28353537}
Hassan, A.~S., O.~G. Pybus, E.~J. Sanders, J.~Albert, and J.~Esbjornsson
  (2017).
\newblock {{D}efining {H}{I}{V}-1 transmission clusters based on sequence
  data}.
\newblock {\em AIDS\/}~{\em 31\/}(9), 1211--1222.

\bibitem[\protect\citeauthoryear{Hatcher}{Hatcher}{2002}]{hatcher2002algebraic}
Hatcher, A. (2002).
\newblock {\em Algebraic Topology}.
\newblock Algebraic Topology. Cambridge University Press.

\bibitem[\protect\citeauthoryear{{Hellinger}}{{Hellinger}}{1909}]{zbMATH02637393}
{Hellinger}, E. (1909).
\newblock {Neue Begr\"undung der Theorie quadratischer Formen von
  unendlichvielen Ver\"anderlichen.}
\newblock {\em {J. Reine Angew. Math.}\/}~{\em 136}, 210--271.

\bibitem[\protect\citeauthoryear{Hofer, Kwitt, Niethammer, and Uhl}{Hofer
  et~al.}{2017}]{NIPS2017_6761}
Hofer, C., R.~Kwitt, M.~Niethammer, and A.~Uhl (2017).
\newblock Deep learning with topological signatures.
\newblock In I.~Guyon, U.~V. Luxburg, S.~Bengio, H.~Wallach, R.~Fergus,
  S.~Vishwanathan, and R.~Garnett (Eds.), {\em Advances in Neural Information
  Processing Systems 30}, pp.\  1634--1644. Curran Associates, Inc.

\bibitem[\protect\citeauthoryear{Huang, Niu, Gao, Fu, and Li}{Huang
  et~al.}{2010}]{pmid20053844}
Huang, Y., B.~Niu, Y.~Gao, L.~Fu, and W.~Li (2010).
\newblock {{C}{D}-{H}{I}{T} {S}uite: a web server for clustering and comparing
  biological sequences}.
\newblock {\em Bioinformatics\/}~{\em 26\/}(5), 680--682.

\bibitem[\protect\citeauthoryear{Hu{\'e}, Pillay, Clewley, and Pybus}{Hu{\'e}
  et~al.}{2005}]{Hue22032005}
Hu{\'e}, S., D.~Pillay, J.~P. Clewley, and O.~G. Pybus (2005).
\newblock Genetic analysis reveals the complex structure of {H}{I}{V}-1
  transmission within defined risk groups.
\newblock {\em Proceedings of the National Academy of Sciences\/}~{\em
  102\/}(12), 4425--4429.

\bibitem[\protect\citeauthoryear{Kali{\v{s}}nik}{Kali{\v{s}}nik}{2018}]{TropCoord}
Kali{\v{s}}nik, S. (2018).
\newblock Tropical coordinates on the space of persistence barcodes.
\newblock {\em Foundations of Computational Mathematics\/}, 1--29.

\bibitem[\protect\citeauthoryear{Katoh and Standley}{Katoh and
  Standley}{2013}]{pmid23329690}
Katoh, K. and D.~M. Standley (2013).
\newblock M{A}{F}{F}{T} {M}ultiple {S}equence {A}lignment {S}oftware {V}ersion
  7: {I}mprovements in {P}erformance and {U}sability.
\newblock {\em Molecular Biology and Evolution\/}~{\em 30\/}(4), 772--780.

\bibitem[\protect\citeauthoryear{Kerber, Morozov, and Nigmetov}{Kerber
  et~al.}{2017}]{doi:10.1137/1.9781611974317.9}
Kerber, M., D.~Morozov, and A.~Nigmetov (2017).
\newblock Geometry {H}elps to {C}ompare {P}ersistence {D}iagrams.
\newblock {\em Journal of Experimental Algorithmics\/}~{\em 22}, 1--20.

\bibitem[\protect\citeauthoryear{Kilpatrick, Chmura, Gibbons, Fleischer, Marra,
  and Daszak}{Kilpatrick et~al.}{2006}]{Kilpatrick19368}
Kilpatrick, A.~M., A.~A. Chmura, D.~W. Gibbons, R.~C. Fleischer, P.~P. Marra,
  and P.~Daszak (2006).
\newblock Predicting the global spread of h5n1 avian influenza.
\newblock {\em Proceedings of the National Academy of Sciences\/}~{\em
  103\/}(51), 19368--19373.

\bibitem[\protect\citeauthoryear{Koopman}{Koopman}{1936}]{koopman}
Koopman, B.~O. (1936).
\newblock On distributions admitting a sufficient statistic.
\newblock {\em Transactions of the American Mathematical Society\/}~{\em
  39\/}(3), 399--409.

\bibitem[\protect\citeauthoryear{Kullback and Leibler}{Kullback and
  Leibler}{1951}]{Kullback:1951aa}
Kullback, S. and R.~A. Leibler (1951).
\newblock On information and sufficiency.
\newblock {\em The Annals of Mathematical Statistics\/}~{\em 22\/}(1), 79--86.

\bibitem[\protect\citeauthoryear{Kwitt, Huber, Niethammer, Lin, and
  Bauer}{Kwitt et~al.}{2015}]{NIPS2015_5887}
Kwitt, R., S.~Huber, M.~Niethammer, W.~Lin, and U.~Bauer (2015).
\newblock Statistical {T}opological {D}ata {A}nalysis -- {A} {K}ernel
  {P}erspective.
\newblock In C.~Cortes, N.~D. Lawrence, D.~D. Lee, M.~Sugiyama, and R.~Garnett
  (Eds.), {\em Advances in Neural Information Processing Systems 28}, pp.\
  3070--3078. Curran Associates, Inc.

\bibitem[\protect\citeauthoryear{{L}e {C}am}{{L}e
  {C}am}{1964}]{10.2307/2238284}
{L}e {C}am, L. (1964).
\newblock Sufficiency and approximate sufficiency.
\newblock {\em The Annals of Mathematical Statistics\/}~{\em 35\/}(4),
  1419--1455.

\bibitem[\protect\citeauthoryear{Lin, Monod, and Yoshida}{Lin
  et~al.}{2018}]{lin2018tropical}
Lin, B., A.~Monod, and R.~Yoshida (2018).
\newblock Tropical {F}oundations for {P}robability \& {S}tatistics on
  {P}hylogenetic {T}ree {S}pace.
\newblock {\em arXiv\/}.

\bibitem[\protect\citeauthoryear{Love, Hogenesch, and Irizarry}{Love
  et~al.}{2016}]{Love:2016aa}
Love, M.~I., J.~B. Hogenesch, and R.~A. Irizarry (2016).
\newblock Modeling of {R}{N}{A}-seq fragment sequence bias reduces systematic
  errors in transcript abundance estimation.
\newblock {\em Nature Biotechnology\/}~{\em 34}, 1287--1291.

\bibitem[\protect\citeauthoryear{Maljkovic~Berry, Melendrez, Li, Hawksworth,
  Brice, Blair, Halsey, Williams, Fernandez, Yoon, Edwards, Kuschner, Lin,
  Thomas, and Jarman}{Maljkovic~Berry et~al.}{2016}]{pmid28034300}
Maljkovic~Berry, I., M.~C. Melendrez, T.~Li, A.~W. Hawksworth, G.~T. Brice,
  P.~J. Blair, E.~S. Halsey, M.~Williams, S.~Fernandez, I.-K. Yoon, L.~D.
  Edwards, R.~Kuschner, X.~Lin, S.~J. Thomas, and R.~G. Jarman (2016).
\newblock Frequency of influenza {H}3{N}2 intra-subtype reassortment:
  attributes and implications of reassortant spread.
\newblock {\em BMC Biology\/}~{\em 14\/}(1), 117.

\bibitem[\protect\citeauthoryear{Marshall, Priyamvada, Ende, Steel, and
  Lowen}{Marshall et~al.}{2013}]{Marshall:2013aa}
Marshall, N., L.~Priyamvada, Z.~Ende, J.~Steel, and A.~C. Lowen (2013).
\newblock Influenza virus reassortment occurs with high frequency in the
  absence of segment mismatch.
\newblock {\em PLoS Pathogens\/}~{\em 9\/}(6), e1003421.

\bibitem[\protect\citeauthoryear{Mileyko, Mukherjee, and Harer}{Mileyko
  et~al.}{2011}]{0266-5611-27-12-124007}
Mileyko, Y., S.~Mukherjee, and J.~Harer (2011).
\newblock Probability measures on the space of persistence diagrams.
\newblock {\em Inverse Problems\/}~{\em 27\/}(12), 124007.

\bibitem[\protect\citeauthoryear{Neyman}{Neyman}{1935}]{Neyman}
Neyman, J. (1935).
\newblock Su un teorema concernente le cosiddette statistiche sufficienti.
\newblock {\em Giornale Dell'Istituto Italiano degli Attuari\/}~{\em 6},
  320--334.

\bibitem[\protect\citeauthoryear{Nguyen, Than, Thanh, Hung, Nguyen, and
  Kim}{Nguyen et~al.}{2016}]{10.1371/journal.pone.0149608}
Nguyen, T.~H., V.~T. Than, H.~D. Thanh, V.-K. Hung, D.~T. Nguyen, and W.~Kim
  (2016).
\newblock Intersubtype {R}eassortments of {H}5{N}1 {H}ighly {P}athogenic
  {A}vian {I}nfluenza {V}iruses {I}solated from {Q}uail.
\newblock {\em PLoS ONE\/}~{\em 11\/}(2), 1--15.

\bibitem[\protect\citeauthoryear{Otter, Porter, Tillmann, Grindrod, and
  Harrington}{Otter et~al.}{2017}]{Otter2017}
Otter, N., M.~A. Porter, U.~Tillmann, P.~Grindrod, and H.~A. Harrington (2017).
\newblock A roadmap for the computation of persistent homology.
\newblock {\em EPJ Data Science\/}~{\em 6\/}(1), 17.

\bibitem[\protect\citeauthoryear{Pachter and Sturmfels}{Pachter and
  Sturmfels}{2005}]{pachter2005algebraic}
Pachter, L. and B.~Sturmfels (2005).
\newblock {\em Algebraic {S}tatistics for {C}omputational {B}iology},
  Volume~13.
\newblock Cambridge {U}niversity {P}ress.

\bibitem[\protect\citeauthoryear{Paradis, Claude, and Strimmer}{Paradis
  et~al.}{2004}]{doi:10.1093/bioinformatics/btg412}
Paradis, E., J.~Claude, and K.~Strimmer (2004).
\newblock A{P}{E}: {A}nalyses of {P}hylogenetics and {E}volution in {R}
  language.
\newblock {\em Bioinformatics\/}~{\em 20\/}(2), 289--290.

\bibitem[\protect\citeauthoryear{Parthasarathy}{Parthasarathy}{1967}]{1967}
Parthasarathy, K. (1967).
\newblock Probability and mathematical statistics: A series of monographs and
  textbooks.
\newblock In {\em Probability Measures on Metric Spaces}, Probability and
  Mathematical Statistics: A Series of Monographs and Textbooks, pp.\ ~ii.
  Academic Press.

\bibitem[\protect\citeauthoryear{Pati\~{n}o{-}Galindo, Torres-Puente, Bracho,
  Alastru\'{e}, Juan, Navarro, Galindo, Gimeno, Ortega, and
  Gonz\'{a}lez-Candelas}{Pati\~{n}o{-}Galindo et~al.}{2017}]{juan1}
Pati\~{n}o{-}Galindo, J.~{\'{A}}., M.~Torres-Puente, M.~A. Bracho,
  I.~Alastru\'{e}, A.~Juan, D.~Navarro, M.~J. Galindo, C.~Gimeno, E.~Ortega,
  and F.~Gonz\'{a}lez-Candelas (2017).
\newblock {{I}dentification of a large, fast-expanding {H}{I}{V}-1 subtype {B}
  transmission cluster among {M}{S}{M} in {V}alencia, {S}pain}.
\newblock {\em PLoS ONE\/}~{\em 12\/}(2), e0171062.

\bibitem[\protect\citeauthoryear{Pati{\~{n}}o-Galindo, Torres-Puente, Bracho,
  Alastru{\'{e}}, Juan, Navarro, Galindo, Ocete, Ortega, Gimeno, Belda,
  Dom{\'{\i}}nguez, Moreno, and Gonz{\'{a}}lez-Candelas}{Pati{\~{n}}o-Galindo
  et~al.}{2017}]{juan2}
Pati{\~{n}}o-Galindo, J.~{\'{A}}., M.~Torres-Puente, M.~A. Bracho,
  I.~Alastru{\'{e}}, A.~Juan, D.~Navarro, M.~J. Galindo, D.~Ocete, E.~Ortega,
  C.~Gimeno, J.~Belda, V.~Dom{\'{\i}}nguez, R.~Moreno, and
  F.~Gonz{\'{a}}lez-Candelas (2017).
\newblock The molecular epidemiology of {H}{I}{V}-1 in the {C}omunidad
  {V}alenciana ({S}pain): analysis of transmission clusters.
\newblock {\em Scientific Reports\/}~{\em 7\/}(1), 11584.

\bibitem[\protect\citeauthoryear{Perea and Carlsson}{Perea and
  Carlsson}{2014}]{Klein}
Perea, J.~A. and G.~Carlsson (2014).
\newblock A {K}lein-{B}ottle-{B}ased {D}ictionary for {T}exture
  {R}epresentation.
\newblock {\em International Journal of Computer Vision\/}~{\em 107\/}(1),
  75--97.

\bibitem[\protect\citeauthoryear{P\'{e}rez-Losada, Arenas, Gal\'{a}n, Palero,
  and Gonz\'{a}lez-Candelas}{P\'{e}rez-Losada
  et~al.}{2015}]{PEREZLOSADA2015296}
P\'{e}rez-Losada, M., M.~Arenas, J.~C. Gal\'{a}n, F.~Palero, and
  F.~Gonz\'{a}lez-Candelas (2015).
\newblock Recombination in viruses: {M}echanisms, methods of study, and
  evolutionary consequences.
\newblock {\em Infection, Genetics and Evolution\/}~{\em 30\/}(Supplement C),
  296--307.

\bibitem[\protect\citeauthoryear{Pitman}{Pitman}{1936}]{pitman1936}
Pitman, E. J.~G. (1936).
\newblock Sufficient statistics and intrinsic accuracy.
\newblock {\em Mathematical Proceedings of the Cambridge Philosophical
  Society\/}~{\em 32\/}(4), 567--579.

\bibitem[\protect\citeauthoryear{Reininghaus, Huber, Bauer, and
  Kwitt}{Reininghaus et~al.}{2015}]{Reininghaus_2015_CVPR}
Reininghaus, J., S.~Huber, U.~Bauer, and R.~Kwitt (2015).
\newblock A stable multi-scale kernel for topological machine learning.
\newblock In {\em Proceedings of the IEEE conference on computer vision and
  pattern recognition}, pp.\  4741--4748.

\bibitem[\protect\citeauthoryear{Robinson and Turner}{Robinson and
  Turner}{2017}]{Robinson2017}
Robinson, A. and K.~Turner (2017).
\newblock Hypothesis testing for topological data analysis.
\newblock {\em Journal of Applied and Computational Topology\/}~{\em 1\/}(2),
  241--261.

\bibitem[\protect\citeauthoryear{Ronquist, Teslenko, van~der Mark, Ayres,
  Darling, H\"{o}hna, Larget, Liu, Suchard, and Huelsenbeck}{Ronquist
  et~al.}{2012}]{doi:10.1093/sysbio/sys029}
Ronquist, F., M.~Teslenko, P.~van~der Mark, D.~L. Ayres, A.~Darling,
  S.~H\"{o}hna, B.~Larget, L.~Liu, M.~A. Suchard, and J.~P. Huelsenbeck (2012).
\newblock Mr{B}ayes 3.2: {E}fficient {B}ayesian {P}hylogenetic {I}nference and
  {M}odel {C}hoice {A}cross a {L}arge {M}odel {S}pace.
\newblock {\em Systematic Biology\/}~{\em 61\/}(3), 539--542.

\bibitem[\protect\citeauthoryear{Saitou and Nei}{Saitou and
  Nei}{1987}]{doi:10.1093/oxfordjournals.molbev.a040454}
Saitou, N. and M.~Nei (1987).
\newblock The {N}eighbor-{J}oining {M}ethod: {A} {N}ew {M}ethod for
  {R}econstructing {P}hylogenetic {T}rees.
\newblock {\em Molecular Biology and Evolution\/}~{\em 4\/}(4), 406--425.

\bibitem[\protect\citeauthoryear{Swofford}{Swofford}{2001}]{Swofford01paup*:phylogenetic}
Swofford, D.~L. (2001).
\newblock P{A}{U}{P}*: {P}hylogenetic {A}nalysis {U}sing {P}arsimony (and other
  methods) 4.0.b5.

\bibitem[\protect\citeauthoryear{{The Wellcome Trust Case Control
  Consortium}}{{The Wellcome Trust Case Control
  Consortium}}{2007}]{Consortium:2007aa}
{The Wellcome Trust Case Control Consortium} (2007).
\newblock {Genome-wide association study of 14,000 cases of seven common
  diseases and 3,000 shared controls}.
\newblock {\em Nature\/}~{\em 447}, 661--6678.

\bibitem[\protect\citeauthoryear{Tian, Zhou, Dong, Van~Boeckel, Cui, Newman,
  Takekawa, Prosser, Xiao, Wu, Cazelles, Huang, Yang, Grenfell, and Xu}{Tian
  et~al.}{2015}]{Tian06012015}
Tian, H., S.~Zhou, L.~Dong, T.~P. Van~Boeckel, Y.~Cui, S.~H. Newman, J.~Y.
  Takekawa, D.~J. Prosser, X.~Xiao, Y.~Wu, B.~Cazelles, S.~Huang, R.~Yang,
  B.~T. Grenfell, and B.~Xu (2015).
\newblock Avian influenza {H}5{N}1 viral and bird migration networks in {A}sia.
\newblock {\em Proceedings of the National Academy of Sciences\/}~{\em
  112\/}(1), 172--177.

\bibitem[\protect\citeauthoryear{Turchin, Chiang, Palmer, Sankararaman, Reich,
  Hirschhorn, and of~ANthropometric Traits (GIANT)~Consortium}{Turchin
  et~al.}{2012}]{Turchin:2012aa}
Turchin, M.~C., C.~W. Chiang, C.~D. Palmer, S.~Sankararaman, D.~Reich, J.~N.
  Hirschhorn, and G.~I. of~ANthropometric Traits (GIANT)~Consortium (2012).
\newblock Evidence of widespread selection on standing variation in {Europe} at
  height-associated {SNPs}.
\newblock {\em Nature Genetics\/}~{\em 44\/}(9), 1015.

\bibitem[\protect\citeauthoryear{Turner, Mileyko, Mukherjee, and Harer}{Turner
  et~al.}{2014}]{Turner2014}
Turner, K., Y.~Mileyko, S.~Mukherjee, and J.~Harer (2014).
\newblock Fr{\'e}chet means for distributions of persistence diagrams.
\newblock {\em Discrete {\&} Computational Geometry\/}~{\em 52\/}(1), 44--70.

\bibitem[\protect\citeauthoryear{Turner, Mukherjee, and Boyer}{Turner
  et~al.}{2014}]{Turner01122014}
Turner, K., S.~Mukherjee, and D.~M. Boyer (2014).
\newblock Persistent homology transform for modeling shapes and surfaces.
\newblock {\em Information and Inference\/}~{\em 3\/}(4), 310--344.

\bibitem[\protect\citeauthoryear{Van~der Vaart}{Van~der
  Vaart}{2000}]{van2000asymptotic}
Van~der Vaart, A.~W. (2000).
\newblock {\em Asymptotic {S}tatistics}, Volume~3.
\newblock Cambridge University Press.

\bibitem[\protect\citeauthoryear{Willer, Speliotes, Loos, Li, Lindgren, Heid,
  Berndt, Elliott, Jackson, and Lamina}{Willer et~al.}{2009}]{Willer:2009aa}
Willer, C.~J., E.~K. Speliotes, R.~J. Loos, S.~Li, C.~M. Lindgren, I.~M. Heid,
  S.~I. Berndt, A.~L. Elliott, A.~U. Jackson, and C.~Lamina (2009).
\newblock Six new loci associated with body mass index highlight a neuronal
  influence on body weight regulation.
\newblock {\em Nature Genetics\/}~{\em 41\/}(1), 25.

\bibitem[\protect\citeauthoryear{Worobey, Watts, McKay, Suchard, Granade,
  Teuwen, Koblin, Heneine, Lemey, and Jaffe}{Worobey
  et~al.}{2016}]{pmid27783600}
Worobey, M., T.~D. Watts, R.~A. McKay, M.~A. Suchard, T.~Granade, D.~E. Teuwen,
  B.~A. Koblin, W.~Heneine, P.~Lemey, and H.~W. Jaffe (2016).
\newblock {1970s and `{P}atient 0' {H}{I}{V}-1 genomes illuminate early
  {H}{I}{V}/{A}{I}{D}{S} history in {N}orth {A}merica}.
\newblock {\em Nature\/}~{\em 539\/}(7627), 98--101.

\bibitem[\protect\citeauthoryear{Yan and Wu}{Yan and Wu}{2011}]{Yan:2011aa}
Yan, S. and G.~Wu (2011).
\newblock Possible reason for cross-species and cross-subtype reassortment in
  polymerase basic protein 2 from influenza {A} virus.
\newblock {\em Protein and Peptide Letters\/}~{\em 18\/}(5), 434--439.

\bibitem[\protect\citeauthoryear{Yang, Ferreira, Morris, Medland, Madden,
  Heath, Martin, Montgomery, Weedon, and Loos}{Yang et~al.}{2012}]{Yang:2012aa}
Yang, J., T.~Ferreira, A.~P. Morris, S.~E. Medland, P.~A. Madden, A.~C. Heath,
  N.~G. Martin, G.~W. Montgomery, M.~N. Weedon, and R.~J. Loos (2012).
\newblock Conditional and joint multiple-{SNP} analysis of {GWAS} summary
  statistics identifies additional variants influencing complex traits.
\newblock {\em Nature Genetics\/}~{\em 44\/}(4), 369.

\bibitem[\protect\citeauthoryear{Yang}{Yang}{1995}]{pmid7713447}
Yang, Z. (1995).
\newblock {{A} space-time process model for the evolution of {D}{N}{A}
  sequences}.
\newblock {\em Genetics\/}~{\em 139\/}(2), 993--1005.

\bibitem[\protect\citeauthoryear{Zomorodian and Carlsson}{Zomorodian and
  Carlsson}{2005}]{ZC}
Zomorodian, A.~J. and G.~Carlsson (2005).
\newblock Computing persistent homology.
\newblock {\em Discrete and Computational Geometry\/}~{\em 33}, 249--274.

\end{thebibliography}


\end{document}